\newcommand{\NN}{\mathbb{N}}
\newcommand{\ZZ}{\mathbb{Z}}
\newcommand{\cP}{\mathcal{P}}
\DeclareMathOperator{\Ker}{Ker}
\newcommand{\BS}{BS(2,3)}
\newtheorem{theorem}{Theorem}
\newtheorem{corollary}[theorem]{Corollary}
\newtheorem{lemma}[theorem]{Lemma}
\newtheorem{proposition}[theorem]{Proposition}
\theoremstyle{remark}
\newtheorem{remark}[theorem]{Remark}
\theoremstyle{definition}
\newtheorem{definition}[theorem]{Definition}
\begin{document}

	\title{A closer look at the non-Hopfianness of $\BS$}
	
	\author{Tom Kaiser}
	
	\maketitle
	
	\begin{abstract}
		The Baumslag-Solitar group $\BS$, is a so-called non-Hopfian group, meaning that it has an epimorphism $\phi$ onto itself, that is not injective. In particular this is equivalent to saying that $\BS$ has a non-trivial quotient that is isomorphic to itself. As a consequence the Cayley graph of $\BS$ has a quotient that is isomorphic to itself up to change of generators. We describe this quotient on the graph-level and take a closer look at the most common epimorphism $\phi$. We show its kernel is a free group of infinite rank with an explicit set of generators. Finally we show how $\phi$ appears as a morphism on fundamental groups induced by some continuous map. This point of view was communicated to the author by Gilbert Levitt.
	\end{abstract}

	
	\tikzset{->-/.style={decoration={
				markings,
				mark=at position .5 with {\arrow{>}}},postaction={decorate}}}

	\section{Introduction}

	Baumslag-Solitar groups are two-generator one-relator groups given by the presentation
	\[ BS(n,m) =  \langle a,b \;\lvert \; ba^n =  a^m b\rangle,\] 
	for $n,m\in\ZZ$. They were introduced in 1962 by Baumslag and Solitar\footnote{The group $BS(1,2)$ was already around at the time and appears in a 1951 paper by Higman \cite{higman}.} to describe the first examples of non-Hopfian finitely generated one-relator groups. A group is Hopfian whenever $G/N\cong G$ implies that $N=\{1\}$. The specific result \cite{BS} states that $BS(n,m)$ is Hopfian if and only if
	\begin{itemize}
		\item $n$ divides $m$ or vice versa; or
		\item $n$ and $m$ have the same prime divisors.
	\end{itemize}
	There were some errors in the original proofs. This issue was later resolved by Collins and Levin in \cite{collinslevin}. It is also possible to determine which of these groups are residually finite \cite{meskin}. This is the case when $\lvert m \rvert = \lvert n \rvert$, or $\lvert n\rvert =1$ or $\lvert m \rvert =1$. A last interesting remark is that, whenever $\lvert n\rvert, \lvert m\rvert \ne 1$, these groups are HNN extenstions of $\mathbb{Z}$ with respect to the automorphism $\mu: n\mathbb{Z}\rightarrow m\mathbb{Z}$, sending $n$ to $m$.\newline

	We only focus on the Baumslag-Solitar group $BS(2,3)= \langle a,b \;\lvert \; ba^2 =  a^3b\rangle$ and the following homomorphism which is surjective, but not injective: 
	\[   \phi :  BS(2,3) \rightarrow BS(2,3) :  
	\begin{cases}
	a \mapsto a^2\\
	b\mapsto b \\
	\end{cases}
	. \]
	The element $[a^b,a]$ is non-trivial, and inside the kernel of $\phi$. As mentioned before we notice that $\BS/ \Ker(\phi)$ is isomorphic to $\BS$. And thus their Cayley graphs are isomorphic up to change of generators. We will first describe this quotient from a graph-viewpoint. Our other aim is to show that $\Ker(\phi)$ is a free group of infinite rank. The proof is geometric. We will give an explicit set of generators and show that their action on the Bass-Serre tree associated to $\BS$ is free. Finally we present how $\phi$ appears as a morphism on fundamental groups, induced by a continuous map.\newline
	
	It is useful to remind ourselves of Britton's lemma. We do not give its general version, but directly apply it to $\BS$. 
	
	\begin{lemma}[Britton's lemma]
	Let $\omega= a^{\alpha_0}\prod_{i=1}^{n} b^{\beta_i}a^{\alpha_i}$ in $\BS$ such that $\beta_i\ne 0$. If $\omega =1$, then either 
	\begin{itemize}
	\item $n=0$ and $\alpha_0=0$;
	\item or $n>0$  and $\exists i\in \{1\dots n-1\} $ such that
	\begin{enumerate}
	\item $\beta_i>0,  \beta_{i+1}<0$ and $\alpha_{i}\in 2^{\min\{\beta_i,\lvert\beta_{i+1}\rvert\}} \mathbb{Z}$;
	\item $\beta_i<0,  \beta_{i+1}>0$ and $\alpha_{i}\in 3^{\min\{\lvert\beta_i\rvert,\beta_{i+1}\}} \mathbb{Z}$.
	\end{enumerate}
	\end{itemize}
	\end{lemma}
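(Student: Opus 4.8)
The plan is to use the HNN structure of $\BS$. Write $H=\langle a\rangle\cong\mathbb{Z}$; since $ba^2b^{-1}=a^3$ and $b^{-1}a^3b=a^2$, the group $\BS$ is the HNN extension of $H$ with stable letter $b$ and associated subgroups $\langle a^2\rangle,\langle a^3\rangle\le H$, the defining isomorphism being $a^2\mapsto a^3$. One could simply quote the general normal form theorem for HNN extensions and specialise it, but since the Bass-Serre tree is used later anyway I would argue geometrically. Let $T$ be the Bass-Serre tree of this splitting: $H$ fixes a base vertex $v_0$, the edge group $\langle a^2\rangle$ fixes a base edge $e_0$ incident to $v_0$, each of $b^{\pm1}$ carries a vertex to an adjacent one, and $b$ acts hyperbolically with an axis through $v_0$.

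To $\omega=a^{\alpha_0}\prod_{i=1}^{n}b^{\beta_i}a^{\alpha_i}$ I attach an edge path $p_\omega$ in $T$ based at $v_0$, defined by reading $\omega$ from left to right: a syllable $a^{\alpha_i}$ does not move the current vertex (it lies in a vertex stabiliser), whereas a syllable $b^{\beta_i}$ traces a geodesic segment of length $|\beta_i|$ along a translate of the axis of $b$. If $\omega=1$ in $\BS$ then the element $\omega$ fixes $v_0$, so $p_\omega$ returns to $v_0$, i.e. it is a loop. The case $n=0$ is then immediate: $\omega=a^{\alpha_0}\in H\cong\mathbb{Z}$, so $\omega=1$ forces $\alpha_0=0$.

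Suppose now $n\ge 1$ and $\omega=1$. Then $p_\omega$ is a loop whose length $\sum_i|\beta_i|$ is positive, so, as a loop in a tree, it must backtrack. Inside a single block $b^{\beta_i}$ the path is a geodesic segment, so the backtrack can only occur at the junction vertex $w_i$ reached right after the block $b^{\beta_i}$, for some $i$ with $1\le i\le n-1$ (in particular $n\ge 2$: when $n=1$, $p_\omega$ is a single geodesic segment of positive length and so is not a loop). For the segment arriving at $w_i$ and the one leaving it to share an edge, the two blocks $b^{\beta_i},b^{\beta_{i+1}}$ must point in opposite directions along translates of the axis of $b$, i.e. $\beta_i$ and $\beta_{i+1}$ have opposite signs: a peak ($\beta_i>0>\beta_{i+1}$) or a valley ($\beta_i<0<\beta_{i+1}$). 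Unwinding the edge labels at $w_i$ with the relations $ba^2b^{-1}=a^3$ and $b^{-1}a^3b=a^2$ then shows that at a peak the two segments share their first edge precisely when $a^{\alpha_i}$ fixes $e_0$, that is, precisely when $2\mid\alpha_i$; dually, a valley forces $3\mid\alpha_i$.

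What remains is to upgrade this from divisibility by $2$ (resp. $3$) to divisibility by $2^{\min\{\beta_i,|\beta_{i+1}|\}}$ (resp. $3^{\min\{|\beta_i|,\beta_{i+1}\}}$), i.e. to control how far the backtrack extends, not merely that it occurs. Iterating the same edge computation, the number of edges the two segments share at a peak equals $\min\{v_2(\alpha_i),\beta_i,|\beta_{i+1}|\}$, with $v_2$ the $2$-adic valuation, and dually at a valley; so what is needed is a junction at which this shared length already uses up the shorter of the two blocks. I would try to produce such a junction by choosing $w_i$ carefully, for instance taking it to be a leaf (other than $v_0$) of the finite subtree of $T$ traced out by $p_\omega$, so that both segments are forced to leave that leaf along its unique incident edge; a possible alternative is an induction on $\sum_i|\beta_i|$, folding one backtrack at a time and carrying the conclusion back through each fold. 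I expect this matching of the $2$- and $3$-adic valuations of the $\alpha_i$ against the block lengths $\beta_i$ to be the main obstacle, and the part of the argument that has to be set up most carefully.
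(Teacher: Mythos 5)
Your geometric setup is sound and, as far as it goes, it correctly proves the \emph{standard} form of Britton's lemma for the HNN splitting of $BS(2,3)$ over $\langle a\rangle$: a loop in the Bass--Serre tree must backtrack, backtracking can only occur at a junction between two $b$-blocks of opposite sign (the invariant orientation of the tree rules out same-sign junctions), and the first fold at a peak (resp.\ valley) is exactly the condition $2\mid\alpha_i$ (resp.\ $3\mid\alpha_i$). Your formula for the length of the overlap at a peak, $\min\{v_2(\alpha_i),\beta_i,|\beta_{i+1}|\}$ with $v_2$ the $2$-adic valuation, is also right, since each fold replaces $a^{2k}$ by $a^{3k}$ and so drops the $2$-adic valuation by exactly one. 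Note that the paper gives no proof of this lemma at all --- it is recalled as a known fact --- so there is no in-paper argument to compare yours against; the tree argument is the natural one.

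The step you single out as the main obstacle --- upgrading $2\mid\alpha_i$ to $2^{\min\{\beta_i,|\beta_{i+1}|\}}\mid\alpha_i$ at \emph{some} junction --- cannot be carried out, because the statement as printed is false. Take
\[ \omega \;=\; b^{2}a^{2}b^{-2}\,a^{3}\,b^{2}a^{-2}b^{-2}\,a^{-3}. \]
Using $b^{2}a^{\pm2}b^{-2}=ba^{\pm3}b^{-1}$ and $b^{-1}a^{3}b=a^{2}$ one computes $\omega=ba^{3}(b^{-1}a^{3}b)a^{-3}b^{-1}a^{-3}=ba^{2}b^{-1}a^{-3}=1$. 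Here $(\beta_1,\dots,\beta_4)=(2,-2,2,-2)$ and $(\alpha_1,\alpha_2,\alpha_3)=(2,3,-2)$, so every junction has opposite signs and $\min\{\beta_i,|\beta_{i+1}|\}=2$, yet $4\nmid 2$, $9\nmid 3$ and $4\nmid -2$: no index $i$ satisfies the stated conclusion. This is consistent with your overlap formula: each junction folds by exactly one edge, the loop still closes up through several such partial folds, and no single junction is ever forced to absorb the whole of its shorter block --- so neither the ``leaf of the traced subtree'' idea nor induction on folding can succeed. What your argument genuinely proves, and what the lemma should say, is the version with exponent $1$ in place of $\min\{\beta_i,|\beta_{i+1}|\}$ (the classical Britton's lemma); your instinct to flag the upgrade as the dangerous step was exactly right, and any later use of the stronger form should be re-examined.
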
		
	
	\section*{Acknowledgements}
	The author would like to thank Gilbert Levitt for some very interesting comments. In particular the point of view presented in Section \ref{levittpres} is due to Levitt, as is Figure \ref{Levittpic}.

	\section{An interpretation of non-Hopfianness and a quotient of graphs}
		
	We first calculate the kernel of $\phi$, then remind the reader what the Cayley graph of $\BS$ looks like, in order to finally describe its quotient induced by the morphism $\phi$. 
	
	\begin{definition}
	Given a word $\omega= a^{\alpha_0} \prod_{i=1}^{n} a^{\alpha_i}b^{\beta_i}$ in $\{a,b\}^*$, define $\rho(\omega)=\sum_{i=1}^{n} \beta_i$ and $\rho_{a}(\omega) = \sum_{i=1}^{n} \lvert\beta_i\rvert$. 
	\end{definition}
	
	Note that, as a consequence of Britton's lemma, we see that if $\omega$ represents the trivial element, then $\rho(\omega)=0$.
	
	\begin{lemma}
		$\Ker(\phi)$ is normally generated by $\{ [a^b,a], [a^b, a^{-1}], [a^b,a^2], [a^b, a^{-2}]  \}$.
	\end{lemma}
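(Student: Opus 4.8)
The plan is to prove the two inclusions separately. Write $S=\{[a^b,a],[a^b,a^{-1}],[a^b,a^2],[a^b,a^{-2}]\}$, let $N=\langle\langle S\rangle\rangle$ be its normal closure in $G:=\BS$, and recall $a^b=b^{-1}ab$. The inclusion $N\subseteq\Ker(\phi)$ is a short computation from the relator: one has $(a^b)^3=b^{-1}a^3b=a^2$ and $\phi(a^b)=b^{-1}a^2b=(a^b)^2$, so for every $k\in\ZZ$
\[ \phi\big([a^b,a^k]\big)=\big[(a^b)^2,\,a^{2k}\big]=\big[(a^b)^2,\,(a^b)^{3k}\big]=1, \]
since $(a^b)^2$ and $(a^b)^{3k}$ are powers of the common element $a^b$; in particular $S\subseteq\Ker(\phi)$. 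It is convenient to record also that $N=\langle\langle[a^b,a]\rangle\rangle$: the commutator identities $[x,yz]=[x,z]\,[x,y]^{z}$ and $[x,y^{-1}]=\big([x,y]^{-1}\big)^{y^{-1}}$ exhibit $[a^b,a^{\pm1}]$ and $[a^b,a^{\pm2}]$ as elements of the normal closure of $[a^b,a]$.

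For the reverse inclusion $\Ker(\phi)\subseteq N$ the idea is to show that the homomorphism $\bar\phi\colon G/N\to G$ induced by $\phi$ is injective, by exhibiting an explicit inverse. (It is onto by the first inclusion and because $a=ba^2b^{-1}a^{-2}$ lies in $\langle a^2,b\rangle=\im\phi$.) Concretely, I would define $\psi\colon G\to G/N$ on generators by $\psi(b)=\bar b$ and $\psi(a)=\overline{b\,a\,b^{-1}a^{-1}}$, and set $\alpha:=\overline{bab^{-1}}\in G/N$. Two facts make this work: (i) $\alpha^{2}=\overline{ba^{2}b^{-1}}=\bar a^{3}$, directly from the relator; and (ii) $\alpha$ commutes with $\bar a$ in $G/N$, which is precisely where $N$ is used, since $[bab^{-1},a]=b\,[a^b,a]^{-1}b^{-1}\in N$. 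Given (i) and (ii), $\psi(a)=\alpha\bar a^{-1}$ satisfies $\psi(a)^{2}=\alpha^{2}\bar a^{-2}=\bar a$ and $\psi(a)^{3}=\alpha$, hence
\[ \psi(b)\psi(a)^{2}\psi(b)^{-1}=\bar b\,\bar a\,\bar b^{-1}=\alpha=\psi(a)^{3}, \]
so $\psi$ kills the defining relator and is a well-defined homomorphism. One then checks on generators that $\bar\phi\circ\psi=\mathrm{id}_{G}$ (since $\bar\phi(\psi(a))=\phi(bab^{-1}a^{-1})=ba^{2}b^{-1}a^{-2}=a$) and $\psi\circ\bar\phi=\mathrm{id}_{G/N}$ (since $\psi(\bar\phi(\bar a))=\psi(a^{2})=\psi(a)^{2}=\bar a$); thus $\bar\phi$ is an isomorphism and $\Ker(\phi)=N=\langle\langle S\rangle\rangle$.

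The one step that needs an idea rather than bookkeeping is guessing this $\psi$: one must notice that $\overline{bab^{-1}}$ commutes with $\bar a$ in $G/N$ and that this forces $\psi(a)^{2}=\bar a$; everything else is routine manipulation with the one-relator presentation. A more structural alternative is to restrict to the normal subgroup $K=\Ker(G\to\ZZ)$ ($b\mapsto1$, $a\mapsto0$), which by Reidemeister--Schreier has presentation $\langle a_n\ (n\in\ZZ)\mid a_n^{3}=a_{n-1}^{2}\rangle$ with $a_n=b^{-n}ab^{n}$. Since $\phi$ respects this retraction to $\ZZ$ and $\phi(a_n)=a_n^{2}$, one gets $\Ker(\phi)\subseteq K$ and $N\subseteq K$, so it suffices to see that $K/N\to K$, $a_n\mapsto a_n^{2}$, is injective; this follows because $a_n\mapsto a_n^{3}$ defines an isomorphism $K/N\to K$ (with inverse $a_n\mapsto a_na_{n+1}^{-1}$) through which $a_n\mapsto a_n^{2}$ becomes the shift automorphism $a_n\mapsto a_{n+1}$ of $K$. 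Either route yields $\Ker(\phi)=N$.
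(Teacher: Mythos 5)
Your proof is correct, and it takes a genuinely different route from the paper's. The paper works combinatorially with words: it uses Britton's lemma, a Bezout trick, and a minimal alternating normal form to rewrite an arbitrary element of $\Ker(\phi)$ explicitly as a product of conjugates of the four commutators, arguing by induction on $\rho_a$ (and leaving several verifications to the reader). You instead exhibit an explicit two-sided inverse $\psi$ to the induced map $\bar\phi\colon G/N\to G$. The two identities that drive this --- $(\overline{bab^{-1}})^{2}=\bar a^{3}$ from the relator, and $[bab^{-1},a]=b\,[a^b,a]^{-1}b^{-1}\in N$, which is the only place $N$ enters --- do make $\overline{bab^{-1}a^{-1}}$ a square root of $\bar a$ in $G/N$; I checked that $\psi$ kills the relator and that both composites are the identity on generators, so $\Ker(\phi)/N$ is trivial. (Your Reidemeister--Schreier variant also goes through: in $K/N$ consecutive generators $a_n,a_{n+1}$ commute, which is exactly what makes $a_n\mapsto a_na_{n+1}^{-1}$ well defined and inverse to $a_n\mapsto a_n^{3}$.) Your approach buys rigor and brevity, and since it only uses the single element $[a^b,a]$ for the reverse inclusion it delivers the paper's Corollary in the same stroke; what it gives up is the explicit rewriting procedure, which the paper reuses later for the graph-quotient picture and for building intuition toward the free generating set of the kernel. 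One cosmetic caveat: the identity $[x,y^{-1}]=\bigl([x,y]^{-1}\bigr)^{y^{-1}}$ as written clashes with the conventions $[x,y]=xyx^{-1}y^{-1}$ and $a^b=b^{-1}ab$ used elsewhere; this is harmless, since you verify $S\subseteq\Ker(\phi)$ directly and that parenthetical is not needed for the main argument.
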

	\begin{proof}
	
	Consider a word $\omega$ in the generators of $BS(2,3)$. If $\omega\in \Ker(\phi)$, then $
	\rho(\omega)=0$, since $\rho(\phi(\omega))=0$ and $\phi$ does not affect $b$. Given that a proof is a sequence of trivialities, we can
	check the following:
	
	\begin{itemize}
		\item If $\rho_a(\omega)=2$, then an easy calculation shows that if $\phi(\omega)=1$, then  $\omega$ represents the trivial element.
		\item If $\rho_a(\omega)=4$, one can show that $\omega$ represents one of the generators given above (up to 
		cyclic permutation of the letters).
		\item Consider the group element $b^\beta a^\alpha b^{\beta'}$, where $\beta,\beta'>0$. By Bachet-Bezout, there exist $\lambda,\mu\in \ZZ$ such that $\lambda 2^\beta + \mu 3^{\beta'} =1$. Hence $b^\beta a^\alpha b^{\beta'} = b^\beta a^{\alpha\lambda 2^\beta + \alpha\mu 3^{\beta'}} b^{\beta'} $, which equals $a^{\alpha\lambda 3^{\beta} } b^{\beta+\beta'} a^{\alpha\mu 2^{\beta'} }$.
		\item By repeatedly applying the previous item, any word $\omega$ (up to cyclic permutation) can be replaced by a word 
		$\omega'$ that represents the same element in $BS(2,3)$ and is of the form $a^{\alpha_1}b^{\beta_1}a^{\alpha_1}b^{\beta_2}\dots a^{\alpha_n}b^{\beta_n} $, where none of the exponents are zero and exponents of subsequent $b$'s have opposite sign.
		\item We specialise even more, in the sense that we minimise the exponents $\beta_i$. This means that
		 $ba^{2\alpha} b^{-1}$ can be replaced by $a^{3\alpha}$ and $b^{-1} a^{3\alpha}b$ by $a^{2\alpha}$.
		\item So suppose $\omega$ is of the prescribed minimal form, such that $\rho_a(\omega)>4$ and is in 
		$\Ker(\phi)$, then the image is  $a^{2\alpha_1}b^{\beta_1}a^{2\alpha_2}b^{\beta_2}\dots a^{2\alpha_n}b^{\beta_n} $.  We apply Britton's lemma. Suppose there is some $i\in\{ 1\dots n\}$ such that $\beta_i>0,  \beta_{i+1}<0$ and $2\alpha_{i}\in 2^{\min\{\beta_i,\lvert\beta_{i+1}\rvert\}} \mathbb{Z}$. Without loss of generality we suppose $\beta_i$ is the minimum. Then $\alpha_i= 2^{\beta_i-1}\alpha_i'$, and thus $b^{\beta_i} a^{\alpha_i}b^{\beta_{i+1}}= b a^{3^{\beta_i-1}\alpha_i'} b^{\beta_{i+1}+\beta_i-1}$. Note that this contradicts minimality of the word $\omega$ if $\beta_i>1$. The reader can check that a similar contradiction is found for the second case of Britton's lemma.		
		\item Because of the previous point, there is some $b$ whose exponent is either $1$ or $-1$. Up to taking inverses and cyclicly permuting the word, we may suppose it is $-1$ and it appears on the second position, i.e. $\omega$ can be chosen of the form $b^{\beta_1}a^{\alpha_2}b^{-1}a^{\alpha_3} b^{\beta_3}\dots$
			\item Adding minimality to the argument, this can be rewritten as
			\[b^{\beta_1 - 1} a^{3(\frac{\alpha_2-1}{2}) + \alpha_3 -t} b a b^{-1} a^{t}  b a^{-1}b^{-1 } a^{- t}  a^{t} b a b^{\beta_3 -1} \dots\]
			where $t$ is either $1$ or $2$, such that $\alpha_3-t\in 3\ZZ$. Note that the first part of our rewritten word is exactly  a generator described in the statement of the lemma. Suppose $\omega'$ is the remaining word, after deleting the generator. One notices that $\rho_a(\omega')=\rho_a(\omega)-2$. Hence an induction argument based on the value of $\rho_a$ finishes the proof.
		\end{itemize}

	\end{proof}

	\begin{corollary}
		$\Ker(\phi)$ is normally generated by $[a^b,a]$.
	\end{corollary}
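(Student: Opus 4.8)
The plan is to deduce the corollary directly from the preceding lemma, which already shows that $\Ker(\phi)$ is normally generated by the four elements $[a^b,a]$, $[a^b,a^{-1}]$, $[a^b,a^2]$, $[a^b,a^{-2}]$. It therefore suffices to show that each of the other three elements lies in the normal subgroup generated by $[a^b,a]$ alone. I would denote by $K$ the normal closure of $[a^b,a]$ in $\BS$ and verify $[a^b,a^{\pm2}]\in K$ and $[a^b,a^{-1}]\in K$.

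The key computational observation is the ``cocycle'' identity for commutators with a fixed first entry: for any group elements $x,y,z$ one has $[x,yz] = [x,y]\cdot{}^{y}[x,z]$, where ${}^{y}g = ygy^{-1}$. Applying this with $x = a^b$ gives $[a^b,a^2] = [a^b,a]\cdot {}^{a}[a^b,a]$, so $[a^b,a^2]\in K$ immediately. Similarly $[a^b,a^{-1}] = {}^{a^{-1}}\!\big([a^b,a]^{-1}\big)$, which is again a conjugate of (the inverse of) $[a^b,a]$, hence in $K$; and then $[a^b,a^{-2}] = [a^b,a^{-1}]\cdot {}^{a^{-1}}[a^b,a^{-1}]$ lies in $K$ by the same reasoning. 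Since all four normal generators from the lemma lie in $K$, and $K\subseteq \Ker(\phi)$ because $\phi([a^b,a])=[{}(a^2)^b,a^2]=1$ (as $\phi$ is a homomorphism and $[a^b,a]\in\Ker\phi$), we conclude $K = \Ker(\phi)$.

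I do not expect any real obstacle here: the argument is purely formal manipulation of commutator identities, and the substantive work has already been done in the lemma. The only point requiring a little care is bookkeeping with the conjugation convention $a^b = b^{-1}ab$ versus $bab^{-1}$, and making sure the commutator identity is applied with the correct handedness; once the convention is fixed the three reductions are one line each. An alternative, essentially equivalent, route would be to observe that $\phi$ restricted to the subgroup $\langle a\rangle$ has image $\langle a^2\rangle$ and that all the ``obstruction'' commutators $[a^b,a^k]$ are expressible via the single relation's failure to hold after applying $\phi$, but the commutator-identity computation is the cleanest to present.
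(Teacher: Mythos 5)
Your proposal is correct and follows essentially the same route as the paper: the paper simply writes out the three explicit identities $[a^b,a^{-1}] = a^{-1}[a^b,a]^{-1}a$, $[a^b,a^{2}] = [a^b,a]\,a[a^b,a]a^{-1}$ and $[a^b,a^{-2}] = a^{-1}[a^b,a]^{-1}a^{-1}[a^b,a]^{-1}a^{2}$, which are exactly what your cocycle identity $[x,yz]=[x,y]\cdot{}^{y}[x,z]$ produces. The only cosmetic difference is that you derive these from the general commutator identity rather than stating them directly; both arguments are one-liners resting on the preceding lemma.
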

	\begin{proof}
		Since all other generators can by normally generated by $[a^b,a]$:
		\begin{itemize}
			\item $[a^b,a^{-1}] = a^{-1}[a^b,a]^{-1}a$,
			\item $[a^b,a^{2}] = [a^b,a] a [a^b,a]a^{-1}$, and
			\item $[a^b,a^{-2}] = a^{-1}[a^b,a]^{-1} a^{-1} [a^b,a]^{-1} a^2$.
		\end{itemize}
	\end{proof}

	A description of $BS(1,2)$ and its Cayley graph is a pretty standard introduction to the world of Baumslag-Solitar groups (f.e. \cite{meier2008groups}). For self-containment purposes, we quickly describe the Cayley graph of $BS(2,3)$, which follows a slight variation of the classical example $BS(1,2)$. The standard building block is given in Figure \ref{buildingblock}, where red edges are labeled by $a$ and blue ones by $b$. This is the cycle induced by the single relator. These building blocks fit nicely together into an upper-half plane given in Figure \ref{plane}, based at a copy of $\ZZ$ generated by $a$. We may now construct the Cayley graph. Consider the base line generated by $a$, then we can insert three planes here such that upward arrows do not overlap. One can see this as attaching a plane based at $1$, $a$ and $a^2$. In a similar way we construct lower half planes (one can see this by interpreting Figure \ref{plane} with the baseline on top). Two of these can be attached at $\ZZ$ such that blue arrows do not overlap. Locally this looks like Figure \ref{side3dview}. In general consistenly applying this construction to each left coset of $\langle a \rangle$, which all appear as a Cayley graphs of $\ZZ$, gives us the Cayley graph of $\BS$. Now project such that each left coset of $\langle a \rangle$ becomes a point, we obtain a tree of degree five, as can be seen in Figure \ref{sideview}. This is the Bass-Serre tree associated to the HNN extension $\BS = \text{HNN}(\ZZ,2\ZZ\sim 3\ZZ)$. Choosing a specific vertex of this tree, one sees that three edges are directed upward and two downward (the halfplanes attached upward and downward respectively). This `side view' will be important for our visual interpretation later. We first define the following sets.
	
\begin{minipage}{\linewidth}
		\centering
		\begin{minipage}{0.3\linewidth}
			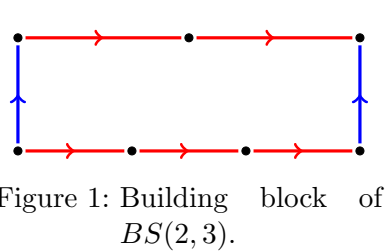
\begin{figure}[H]
				\centering
				\begin{tikzpicture}
				\filldraw (0,0) circle (1.5pt);
				\draw[->-, very thick,red] (0.1,0) -- (1.4,0);
				\filldraw (1.5,0) circle (1.5pt);
				\draw[->-, very thick,red] (1.6,0) -- (2.9,0);
				\filldraw (3,0) circle (1.5pt);
				\draw[->-, very thick,red] (3.1,0) -- (4.4,0);
				\filldraw (4.5,0) circle (1.5pt);
						
				\draw[->-, very thick,blue] (0,0.1) -- (0,1.4);
				\filldraw (0,1.5) circle (1.5pt);
				\draw[->-, very thick,blue] (4.5,0.1) -- (4.5,1.4);
				\filldraw (4.5,1.5) circle (1.5pt);
							
				\draw[->-, very thick,red] (0.1,1.5) -- (2.15,1.5);
				\filldraw (2.25,1.5) circle (1.5pt);
				\draw[->-, very thick,red] (2.35,1.5) -- (4.4,1.5);
				\end{tikzpicture}
				\caption{Building block of $\BS$.}
				\label{buildingblock}
			\end{figure}
		\end{minipage}
		\begin{minipage}{0.64\linewidth}
			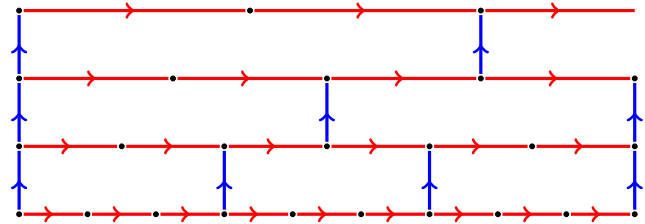
\begin{figure}[H]
				\centering
				\begin{tikzpicture}[scale=0.6]
				\filldraw (0,0) circle (1.5pt);
				\draw[->-, very thick,red] (0.1,0) -- (1.4,0);
				\filldraw (1.5,0) circle (1.5pt);
				\draw[->-, very thick,red] (1.6,0) -- (2.9,0);
				\filldraw (3,0) circle (1.5pt);
				\draw[->-, very thick,red] (3.1,0) -- (4.4,0);
				\filldraw (4.5,0) circle (1.5pt);
				\draw[->-, very thick,red] (4.6,0) -- (5.9,0);
				\filldraw (6,0) circle (1.5pt);
				\draw[->-, very thick,red] (6.1,0) -- (7.4,0);
				\filldraw (7.5,0) circle (1.5pt);
				\draw[->-, very thick,red] (7.6,0) -- (8.9,0);
				\filldraw (9,0) circle (1.5pt);
				\draw[->-, very thick,red] (9.1,0) -- (10.4,0);
				\filldraw (10.5,0) circle (1.5pt);
				\draw[->-, very thick,red] (10.6,0) -- (11.9,0);
				\filldraw (12,0) circle (1.5pt);
				\draw[->-, very thick,red] (12.1,0) -- (13.4,0);
				\filldraw (13.5,0) circle (1.5pt);

				\draw[->-, very thick,blue] (0,0.1) -- (0,1.4);
				\filldraw (0,1.5) circle (1.5pt);
				\draw[->-, very thick,blue] (4.5,0.1) -- (4.5,1.4);
				\filldraw (4.5,1.5) circle (1.5pt);
				\draw[->-, very thick,blue] (9,0.1) -- (9,1.4);
				\filldraw (9,1.5) circle (1.5pt);
				\draw[->-, very thick,blue] (13.5,0.1) -- (13.5,1.4);
				\filldraw (13.5,1.5) circle (1.5pt);

				\draw[->-, very thick,red] (0.1,1.5) -- (2.15,1.5);
				\filldraw (2.25,1.5) circle (1.5pt);
				\draw[->-, very thick,red] (2.35,1.5) -- (4.4,1.5);
				
				\draw[->-, very thick,red] (4.6,1.5) -- (6.65,1.5);
				\filldraw (6.75,1.5) circle (1.5pt);
				\draw[->-, very thick,red] (6.85,1.5) -- (8.9,1.5);
				
				\draw[->-, very thick,red] (9.1,1.5) -- (11.15,1.5);
				\filldraw (11.25,1.5) circle (1.5pt);
				\draw[->-, very thick,red] (11.35,1.5) -- (13.4,1.5);

				\draw[->-, very thick,blue] (0,1.6) -- (0,2.9);
				\filldraw (0,3) circle (1.5pt);
				
				\draw[->-, very thick,blue] (6.75,1.6) -- (6.75,2.9);
				\filldraw (6.75,3) circle (1.5pt);
				
				\draw[->-, very thick,blue] (13.5,1.6) -- (13.5,2.9);
				\filldraw (13.5,3) circle (1.5pt);

				\draw[->-, very thick,red] (0.1,3) -- (3.275,3);
				\filldraw (3.375,3) circle (1.5pt);
				\draw[->-, very thick,red] (3.475,3) -- (6.65,3);
				
				\draw[->-, very thick,red] (6.85,3) -- (10.025,3);
				\filldraw (10.125,3) circle (1.5pt);
				\draw[->-, very thick,red] (10.225,3) -- (13.4,3);

				\draw[->-, very thick,blue] (0,3.1) -- (0,4.4);
				\filldraw (0,4.5) circle (1.5pt);
				
				\draw[->-, very thick,blue] (10.125,3.1) -- (10.125,4.4);
				\filldraw (10.125,4.5) circle (1.5pt);

				\draw[->-, very thick,red] (0.1,4.5) -- (4.9625,4.5);
				\filldraw (5.0625,4.5) circle (1.5pt);
				\draw[->-, very thick,red] (5.1625,4.5) -- (10.025,4.5);
				
				\draw[->-, very thick,red] (10.225,4.5) -- (13.5,4.5);
				\end{tikzpicture}
				\caption{Construction of a plane in $\BS$.}
				\label{plane}
			\end{figure}
		\end{minipage}
	\end{minipage}
	\newline

	\begin{minipage}{\linewidth}
			\centering
			\begin{minipage}{0.25\linewidth}
				\begin{figure}[H]
					\centering
					  \includegraphics[width=\linewidth]{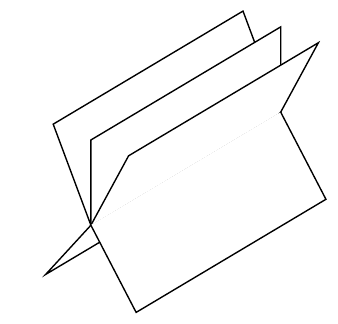}
					\caption{Side 3D view of $\BS$.}
					\label{side3dview}
				\end{figure}
			\end{minipage}
			\begin{minipage}{0.64\linewidth}
							\begin{figure}[H]
								\centering
								\includegraphics[width=\linewidth]{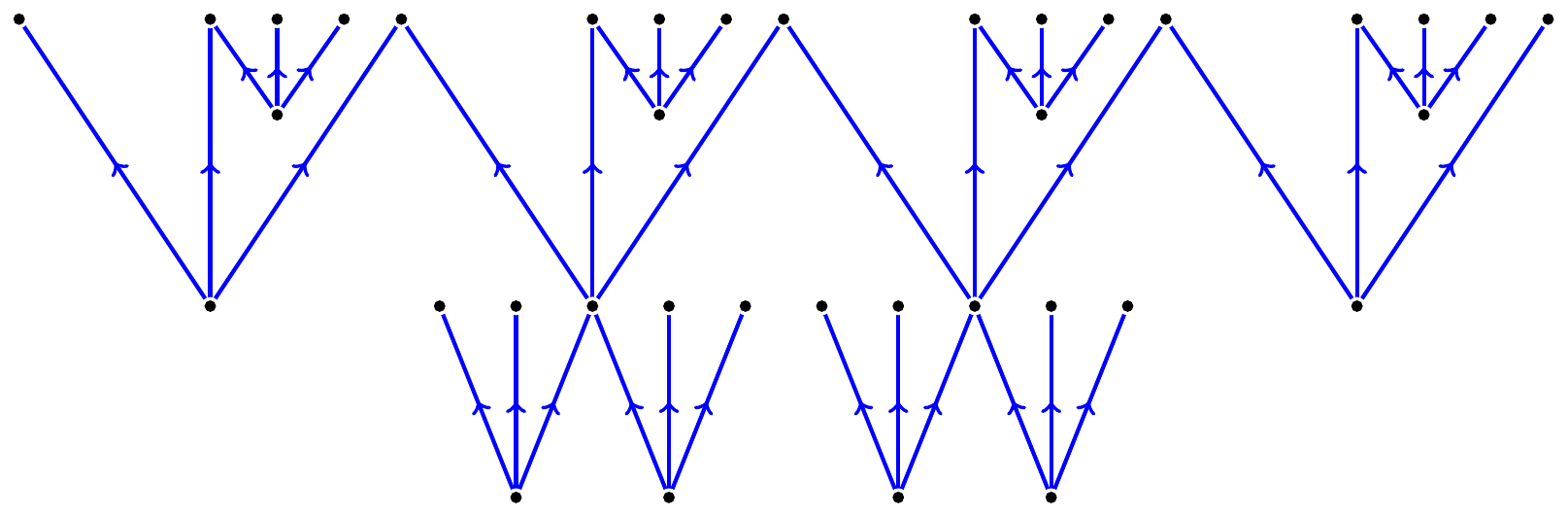}
								\caption{Projection of cosets of $\langle a\rangle$.}
								\label{sideview}
							\end{figure}
						\end{minipage}
		\end{minipage}
		\newline

	We define the height of an element to be its image by the map 
	\[  \alpha: \BS \rightarrow \ZZ: \left\{  \begin{array}{lll}
	a &\mapsto& 0\\
	b &\mapsto& 1
	\end{array}\right.,  \]		
	and we define what it means for two cosets to be neighbours.
	\begin{definition}
		Two left cosets $\lambda$ and $\mu$ of $\langle a \rangle$ are same height neighbours if for $l\in \lambda$ either $l a^b$, $l a a^b$ or $l a^2 a^b$ are in $\mu$. Notation: $\lambda \sim \mu$. 
	\end{definition}
	If we consider the graph defined by those cosets as vertices and edges between two cosets that are same height neighbours, then we obtain a forest (read: disjoint union) of $3$-valent trees. Note that such a tree has a unique $2$-colouring. This colouring can be seen in Figure \ref{coloringH} and thus also the visual interpretation of the same height neighbour relation becomes clear. Presented is a limited sideview of $\BS$, where the cosets are coloured by the $2$-colouring. Two coloured points (representing left cosets of $\langle a \rangle$) that are two edges apart, are neighbours. From the visual it is clear that a point has exactly three neighbours. Now we can define the following sets:
	\begin{definition}
		Given $\lambda$ a left coset of $\langle a \rangle$, then 
		\begin{itemize}
			\item $H_\lambda$ is the connected component of $\lambda$ of the forest induced by the same height neigbour relation;
		\end{itemize}
		Since $H_\lambda$ is a $3$-valent tree it has a unique $2$-colouring.
		\begin{itemize}
			\item  $H_\lambda^+$ are the cosets in $H_\lambda$, that have the same colour as $\lambda$;
			\item $H_\lambda^-:= H_\lambda\backslash H_\lambda^+$.
		\end{itemize}
	\end{definition}
		Note that if $\mu \in H_\lambda^-$, then clearly $H_\lambda^- =  H_\mu^+$.

\begin{figure}
  \includegraphics[width=\linewidth]{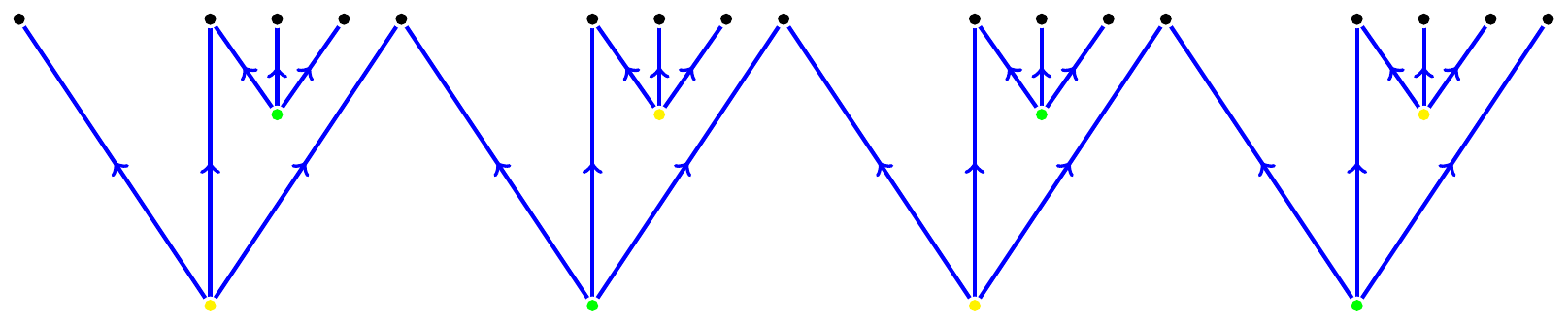}
  \caption{Coloring of $H_\lambda$.}
  \label{coloringH}
\end{figure}

	We now have the tools to show what happens when we quotient by $\Ker(\phi)$. Write \linebreak $G=  BS(2,3)/ \Ker(\phi)$, which we will not consider as an abstract group, but rather as the specific manifestation of $\BS$ as a quotient of itself. Note that $\phi(a)=a^2$. This means that $\overline{a}$, in $G$, will behave like $a^2$ in $BS(2,3)$. In a sense we would like to take the root of $\overline{a}$ (note that the reason for this awkward phrasing comes from the visual interpretation we will see soon). This is why we look for a pre-image of $a$, one can take for example the commutator $[b,a]= bab^{-1}a^{-1}$. So the specific group element $\overline{[b,a]}$ in  $G$ will behave as the abstract group element $a$ in $BS(2,3)$. For brevity we write $\tilde{a}:= \overline{[b,a]}$.\newline

	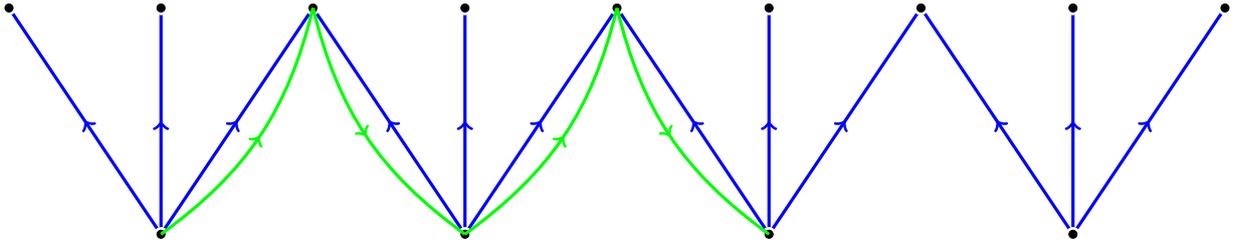
\begin{figure}
		\centering
	\begin{tikzpicture}
	\foreach \x in {0,...,3}
	{\draw[->-, very thick,blue] (4*\x,0) -- (4*\x,3);
		\draw[->-, very thick,blue] (4*\x,0) -- (4*\x +2,3);
		\draw[->-, very thick,blue] (4*\x,0) -- (4*\x -2,3);
		\filldraw[white] (4*\x,3) circle (2.5pt);
		\filldraw[white] (4*\x+2,3) circle (2.5pt);
		\filldraw[white] (4*\x-2,3) circle (2.5pt);
		\filldraw (4*\x,3) circle (1.5pt);
		\filldraw (4*\x+2,3) circle (1.5pt);
		\filldraw (4*\x-2,3) circle (1.5pt);
		\filldraw[white] (4*\x,0) circle (2.5pt);
		\filldraw (4*\x,0) circle (1.5pt);
	}
	
	\draw[->-,green,very thick] (0,0) to[bend right=20] (2,3);
	\draw[->-,green,very thick] (2,3) to[bend right=20] (4,0);
	\draw[->-,green,very thick] (4,0) to[bend right=20] (6,3);
	\draw[->-,green,very thick] (6,3) to[bend right=20] (8,0);
	\end{tikzpicture}
	\caption{The identification of the element $[a^b,a]$.}
	\label{treething}
	\end{figure}

	We will construct the quotient graph by identifying points in the original graph, following a principle we will call `horizontal identification'. Let us see which effect, identifying two points that are at a `distance' $\omega=[a^b,a^{-1}]$ has. We follow the path (in green) of $\omega$ in Figure \ref{treething}. Suppose we start on the line $\langle a \rangle$, then we go up a level to $b\langle a \rangle $. Next we move over an edge labelled $a$, which means that when we take the edge $b^{-1}$, we do not take the same edge back (i.e. we follow the second green arrow). After this, moving by $a^{-1}$, we go up one of the two other planes. Then we go down again. Note that since we identify two  points of $\langle a \rangle$  and $bab^{-1}a^{-1}ba^{-1}b^{-1} \langle a \rangle$, in fact those lines are completely identified. By symmetry all cosets of $H_{\langle a \rangle}^+$, will become equivalent to $\langle a \rangle$ in $G$. Note that by symmetry also all lines in $H_\lambda^-$ are identified with one another. Let us consider the behaviour of $\tilde{a}$. Note that it is exactly equal to the the first four letters of $\omega$. This means that we are basically following the first two green arrows of $\omega$ in Figure \ref{treething}. Next we apply $\tilde{a}$ a second time, we arrive on the line $\omega\langle a \rangle$, which has been identified to $\langle a \rangle$. Specifically the point we arrive in is identified with $a$, since $\tilde{a}=\overline{a}$. One notices that viewing $G$ wrt the new generators $\{\overline{b},\tilde{a} \}$ will give us an intertwining of the lines $\langle a \rangle$ and $bab^{-1} \langle a \rangle$. With respect to the sets $H_{\langle a \rangle}^+$ and $H_{\langle a \rangle}^-$, which are each just one line in $G$, this means that the generator $\tilde{a}$ will alternate points of $H_{\langle a \rangle}^+$ and $H_{\langle a \rangle}^-$. The result can be seen in Figure \ref{horizontalidentification}. Here points on the lower line are obtained by alternating points of $\langle a\rangle$ and $[b,a^{-1}]\langle a \rangle$, where we start by $1$ and $[b,a]$ respectively. Out of this new `baseline' once again three planes open up. We describe one, since the other two are analogous by symmetry. Consider the plane based at $\overline{1}$ in Figure \ref{horizontalidentification}. We take one step back and look at Figure \ref{loweridentification}, before identification. Then we see that the intertwining respects the upper brown points, in the sense that if the first green point from the left is $1$, then the first yellow point from the left is $[b,a]$. This means that the fourth point is exactly $[b,a]a$. Which is underneath the second brown point. This means that, after identification, a position opens up in between consecutive brown points. This makes sense, since by symmetry of the cayley graph, also the line $H_{b\langle a \rangle}^+$, must intertwine with $H_{b\langle a \rangle}^-$. This fills up the gap. Note that points in $H_{b\langle a \rangle}^-$ are exactly the orange points in Figure \ref{horizontalidentification}.
	
\begin{figure}
  \includegraphics[width=\linewidth]{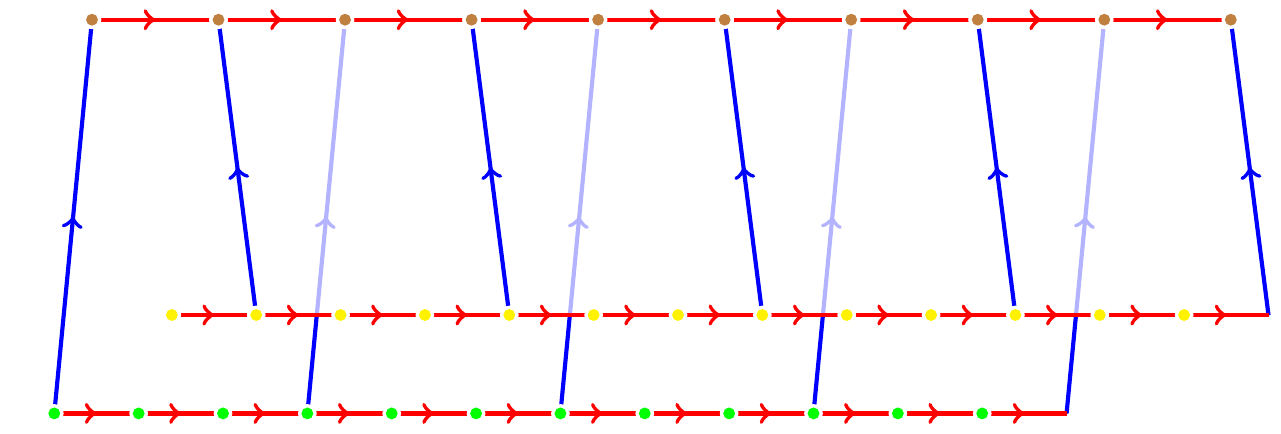}
  \caption{Two lines are interleaved if at a `distance' $bab^{-1}a^{-1}$.}
  \label{loweridentification}
\end{figure}

\begin{figure}
  \includegraphics[width=\linewidth]{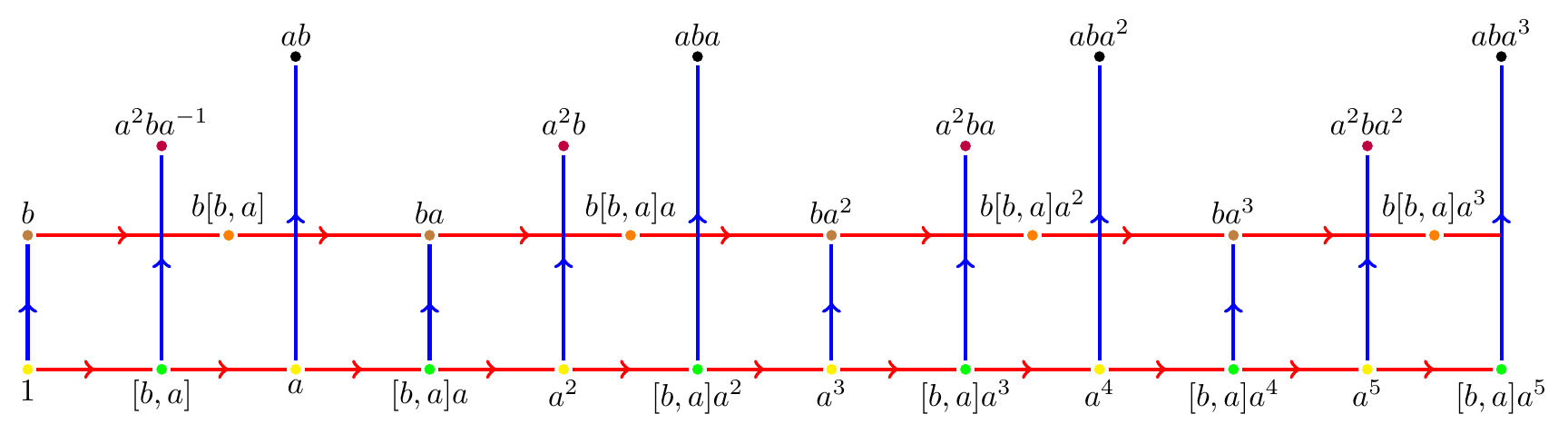}
  \caption{A part of the Cayley graph of $G$, where $\tilde{a}$ is now a generator. Choices of colours are consistent with previous  drawings.}
  \label{horizontalidentification}
\end{figure}

Note that since $\Ker(\phi)$ is normally generated by $[a^b,a]$, this is the only `real' identification that takes place. Other identifications are due to taking conjugates of this element. I.e. if two cosets of $\langle a\rangle$ are identified, of course the tree structure above and below it must become identified too.

\section{A free group of infinite rank}


Let us now prove that the kernel of $\phi$ is a free group of infinite rank. In order to do this in an accessible way, we first introduce some terminology. For illustrations of the concepts, also see Figure \ref{paths}.

\begin{definition}
	Let $\cP$ be the set of geodesic paths starting at $\langle a\rangle$ in the Bass-Serre tree in Figure \ref{sideview} (these are the left cosets of $\langle a \rangle$). Let $\{ a,b \}^*$ be all words in $a,b$ and their inverses, and $n,m\in\ZZ$. We define:
	\begin{enumerate}
		\item Let $\omega\in \{ a,b \}^*$, then the associated path in the Bass-Serre tree is the projection of the associated path in the Cayley graph of $\BS$. This path may contain backtracking.
		\item A good representative of a path $p\in\cP$ is a word $\omega$ such that $p$ is its associated path in the Bass-Serre tree. Equivalently: such that $\rho_a(\omega)$ is minimal and $\omega$ represents the left-coset at the end of $p$. 
		\item A path contains a tip if a good representative $\omega$ contains a subword of the form $ba^n b^{-1}$.
		\item A path contains a valley if a good representative $\omega$ contains a subword of the form $b^{-1}a^n b$.
		\item A path is end-essential if all good representatives do not end in a tip.
		\item A path $p\in\cP$ is swiss if a good representative $\omega$ contains a subword of the form $b^{-1}a^n ba^mb^{-1}$ or $ba^nb^{-1}a^m b$. A path that is not swiss is called nepalese.
		\item Let $\omega$ be a good representative of $p\in \cP$, an end-essential nepalese path with a tip. Let $\omega_1$ be the first part of $\omega$, let $B$ be the tip and $\omega_2$ the remainder. Then $p$ has two triplets at $B$, namely the paths associated to $\omega_1 a B a^{-1} \omega_2$ and $\omega_1 a^{-1} Ba \omega_2$.
		\item Let $\omega$ be a good representative of $p\in \cP$, an end-essential nepalese path with a valley. Let $\omega_1$ be the first part of $\omega$, let $V$ be the valley and $\omega_2$ the remainder. Then $p$ has a twin at $V$, namely the path associated to $\omega_1 a^{-1} V a b^{-1} \omega_2$.
		\item Let $\sim$ be the sibling relation defined on end-essential nepalese paths, induced by setting $p\sim q$ if they belong to the same set of triplets or the same set of twins.
		\item We denote by $G_\sim$ the graph induced by the sibling relation on end-essential nepalese paths. Two paths in the same connected component are called relatives.
		\item For a path $p\in\cP$ define the function $c:\cP\rightarrow\NN$ by setting $c(p)$ equal to the number of valleys and tips.
 	\end{enumerate}	
\end{definition}

We note that the sibling relation is not an equivalence relation, since it is not transitive and not reflexive. But since it is symmetric $\sim$ clearly defines a graph on the set of end-essential nepalese paths. This graph is not connected. We show the following.

\begin{figure}
	\centering
	\subcaptionbox{Swiss.\label{swiss}}[0.45\linewidth][l]
	{\includegraphics[width=0.45\textwidth]{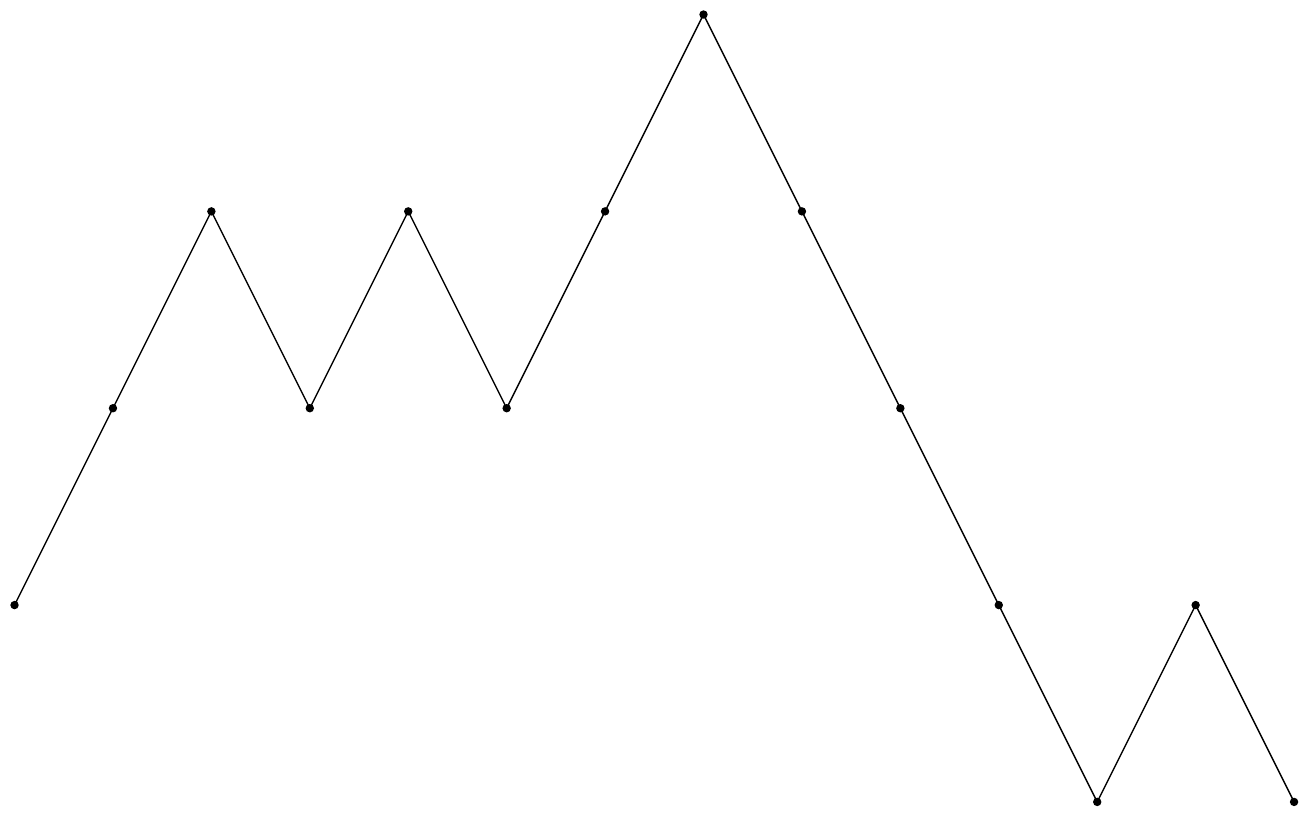}}
	\hspace{20pt}
	\subcaptionbox{Nepalese.\label{nepalese}}[0.45\linewidth][r]
	{\includegraphics[width=0.45\textwidth]{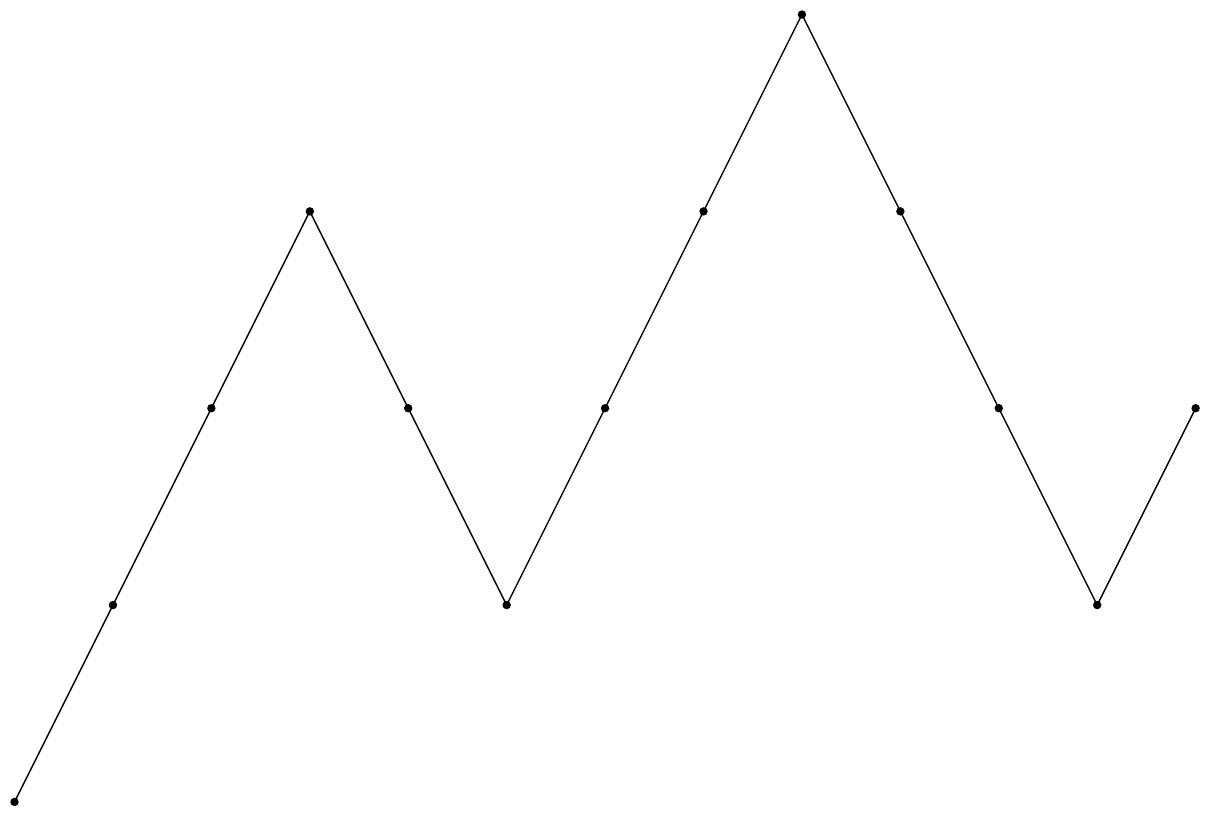}}
	\caption{An artistic interpretation of a swiss and nepalese path. The reader should think of these paths as paths in the Bass-Serre tree \ref{sideview}. Note that the swiss path has valleys and tips that share an edge, while for the nepalese path they are disjoint. Moreover, in this example, the nepalese path is end-essential while the swiss path is not.}\label{paths}
\end{figure}

\begin{theorem}
	$\Ker(\phi) \simeq \mathbb{F}_\infty$. In particular, let $\mathcal{O}$ be a set of words in $\{a,b\}^*$ such that 
	\begin{itemize}
		\item Each word represents an end-essential nepalese path and each such path is represented by at most one word.
		\item For each connected component of $G_\sim$, there is exactly one path that is represented by a word in $\mathcal{O}$.
	\end{itemize}
	Then the symmetric set
	\[ \mathcal{S}= \{ \omega a^{i} [a^b,a]^{j} a^{-i}\omega^{-1} \;\vert\; j\in \{-1,1 \},i\in\{ 0,1\}, \omega\in\mathcal{O}\} \] 
	freely generates  $\Ker(\phi)$. In particular this shows that the action of $\Ker(\phi)$ on the Bass-Serre tree is free.
\end{theorem}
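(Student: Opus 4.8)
The plan is to move everything into the action of $\Ker(\phi)$ on the Bass--Serre tree $T$, get freeness cheaply, and then extract the explicit basis by a Reidemeister--Schreier-type computation organised by the path combinatorics above, with a ping-pong argument to certify that the basis is free. Freeness of $\Ker(\phi)$ is immediate: for $g\in\BS$ the stabiliser of the vertex $g\langle a\rangle$ is $g\langle a\rangle g^{-1}$, and $\phi(g a^{n} g^{-1})=\phi(g)\,a^{2n}\,\phi(g)^{-1}$ is trivial only for $n=0$, so $\Ker(\phi)$ meets every vertex stabiliser, hence every edge stabiliser, trivially; a group acting freely on a tree is free. (One can also read this implication the way the statement suggests: once $\cS$ is known to be a free generating set, every non-trivial element is a reduced word in $\cS$ and hence, by the ping-pong below, a hyperbolic isometry, re-deriving freeness of the action.) So the real work is the basis and the rank.

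By the Corollary $\Ker(\phi)=\langle\!\langle g_0\rangle\!\rangle$ with $g_0=[a^b,a]$, so $\Ker(\phi)$ is generated by the conjugates $w g_0^{\pm1} w^{-1}$, $w\in\BS$. I would first argue that such a conjugate depends only on the projected geodesic $p_w\in\cP$ together with the placement of the elementary zigzag of $g_0$ at its far end, and then normalise $w$: a trailing tip in a good representative of $p_w$ is absorbed into $g_0$ at the cost of one of the finitely many translates recorded by $i\in\{0,1\}$; a swiss configuration, a tip and a valley sharing an edge, is straightened to a nepalese one using the relation $ba^{2}=a^{3}b$, exactly the case analysis of the lemma above; and the residual freedom in the good representative (exponents, via $ba^{2}b^{-1}=a^{3}$) boils down to the position of the last $a$, again absorbed by $i$. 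The occurrence of $i\in\{0,1\}$ is the $\langle a\rangle/\langle a^{2}\rangle$ doubling: since $\phi(a)=a^{2}$, the quotient graph $X:=\Ker(\phi)\backslash T$ has vertex set $\BS/\langle a^{2}\rangle$, so each vertex $\omega\langle a\rangle$ of $T$ splits into two vertices of $X$. The upshot of this step is that $\Ker(\phi)$ is generated by $\{\,\omega a^{i}g_0^{\pm1}a^{-i}\omega^{-1}\,\}$ with $\omega$ ranging over \emph{all} end-essential nepalese paths.

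Next I would show the relations among these generators are exactly the triplet and twin relations carried by $G_\sim$, and that they are localised, coming from the single defining relator of $\BS$ transported around $T$: at a tip the three upward edges at its lower vertex give three conjugates of $g_0$ satisfying one relation (any two determine the third), and at a valley the two twin conjugates coincide. Using these one eliminates, in each component of $G_\sim$, all but one of the associated generators, and --- having removed exactly one generator per independent relation --- what remains is the free generating set $\cS$; phrased through $X$, choosing one path per component of $G_\sim$ is choosing a spanning tree of $X$, and the standard basis of $\pi_1(X)=\Ker(\phi)$ on the remaining edges is $\cS$. For the rank, $G_\sim$ has infinitely many components: for $N\ge1$ the path $p_N$ represented by $b^{-1}a(ba)^{N}$ is a single valley followed by a strictly ascending segment, hence end-essential with no tip (so nepalese), while its twin at the valley is swiss; so each $p_N$ is an isolated vertex of $G_\sim$, and the $p_N$ are pairwise distinct. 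Hence $\mathcal O$, and so $\cS$, is infinite, and $\Ker(\phi)\cong\mathbb{F}_\infty$.

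The genuinely hard point is the claim just invoked, that the triplet/twin relations are the \emph{only} relations, i.e. that $\cS$ is free and not merely generating: one must rule out a product of conjugates of $g_0$ supported on far-apart parts of $T$ collapsing unexpectedly. I would do this by ping-pong on $T$: each generator $\omega a^{i}g_0^{\pm1}a^{-i}\omega^{-1}$ is a hyperbolic isometry with an explicit axis (that of $g_0$, carried into place by $\omega a^{i}$), and because the chosen representatives $\omega$ are end-essential and nepalese these axes overlap only in a controlled, tree-like pattern, so one can build ping-pong domains from the half-trees hanging off them and conclude that every reduced word in $\cS$ acts as a hyperbolic isometry, in particular non-trivially. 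Carrying out this axis bookkeeping carefully, and checking in tandem that the normalisation of the second paragraph really terminates in end-essential nepalese form with no leftover generators, is where essentially all the effort goes.
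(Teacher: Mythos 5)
Your overall architecture coincides with the paper's: start from all conjugates $\omega a^i[a^b,a]^{\pm1}a^{-i}\omega^{-1}$, normalise $\omega$ to an end-essential nepalese representative, use the triplet/twin (sibling) relations to keep one representative per component of $G_\sim$, and certify freeness by a separation argument on the Bass--Serre tree. Your opening observation that $\Ker(\phi)$ is abstractly free because it meets every vertex stabiliser $g\langle a\rangle g^{-1}$ trivially is exactly the remark the paper makes after the theorem, and your reading of $i\in\{0,1\}$ as the index of $\langle a^2\rangle$ in $\langle a\rangle$, together with the ``spanning tree of $\Ker(\phi)\backslash T$'' picture, is a reasonable repackaging. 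Your explicit infinite family witnessing infinite rank addresses a point the paper leaves implicit.

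However, as written the proposal defers precisely the two steps that constitute the proof. First, ``the triplet/twin relations are the only relations, so removing one generator per relation leaves a free basis'' is the statement to be proved, not an argument; the paper instead gives explicit rewriting identities showing that a swiss $\omega$ decomposes the conjugate into conjugates along strictly shorter paths (induction on length), that a non-end-essential $\omega$ pushes the conjugate into the rank-two free group $\langle[a^b,a],\,a[a^b,a]a^{-1}\rangle$, and that exchanging $\omega$ for a sibling costs one extra conjugate of $[a^b,a]^{\pm1}$ along a path $q$ with $c(q)<c(p)$, closing a second induction on $c$. Second, your ping-pong is only announced; the paper's actual freeness argument fixes the second edge $E$ of the translated commutator path and shows that no subsequent factor of a reduced $\mathcal S$-word recrosses $E$, and the one delicate case --- when the corresponding edge $E_{\tilde\omega}$ of the next factor lies on $\omega^{-1}$ --- is exactly where the ``one path per component of $G_\sim$'' hypothesis is needed; your sketch does not isolate this case, so the claim that the axes ``overlap only in a controlled pattern'' is unsupported. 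Finally, your witness family needs repair: with the twin formula the paper actually uses in its computation (the twin of $\omega_1 V\omega_2$ is $\omega_1 a^{-1}Va\,\omega_2$), the twin of $b^{-1}a(ba)^N$ is $a^{-1}b^{-1}aba^{2}(ba)^{N-1}$, which is again end-essential and nepalese, so $p_N$ is \emph{not} an isolated vertex of $G_\sim$; your assertion that the twin is swiss rests on a literal reading of the definition that conflicts with the paper's own use of it, so you should instead argue that the $p_N$ lie in pairwise distinct components (or exhibit some other infinite family) to conclude $\Ker(\phi)\simeq\mathbb{F}_\infty$.
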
 
\begin{proof}
	{\em Generation.}
	We first show that $\mathcal{S}$ is generating. Let $\tilde{\mathcal{O}}$ be a set containing for each geodesic path in the Bass-Serre tree exactly one word which is a good representative. Then it can easily be seen that $\Ker(\phi)$ is generated by 
	\[\mathcal{S}= \{ \omega a^{i} [a^b,a]^{j} a^{-i}\omega^{-1} \;\vert\; j\in \{-1,1 \},i\in\{ 0,1 \}, \omega\in\tilde{\mathcal{O}}\}. \] 
	Hence we can reduce the number of paths needed. Suppose $\omega$ represents a swiss path, then it is of the form $\omega_1 bab^{-1}ab\omega_2$ or $\omega_1 b^{-1}abab^{-1}\omega_2$ (up to possibly taking other exponents of the $a$'s). We treat these two cases for $i=0$ and $j=1$, the other cases are similar. Note that for the first calculation we consider the conjugated group element and in the second we only consider the group element representing the path. As we are conjugating the paths this leads to redundant notation. We write the first in full for the readers convenience.
	\begin{align*}
		\omega [a^b,a] \omega^{-1} 	&= 	\omega_1 bab^{-1}ab\omega_2 [a^b,a] \omega_2^{-1} b^{-1}a^{-1}ba^{-1}b^{-1}\omega_2\\
									&=	\omega_1 [a^b,a] \omega_1^{-1} \omega_1 aba \omega_2 [a^b,a] \omega_2^{-1} a^{-1}b^{-1} a^{-1} \omega_1^{-1} \omega_1 [a^b,a]^{-1} \omega_1^{-1}, \text{ or }\\[10pt]
		\omega 	&= 	\omega_1 b^{-1}abab^{-1}\omega_2 \\
									&=	\omega_1 a^{-1} b^{-1} [a^b,a] ba \omega_1^{-1} \omega_1 a^{-1}b^{-1}a^4 \omega_2 .
 	\end{align*}
 	Hence we have reduced the problem to the shorter paths $\omega_1$ and $\omega_1 aba \omega_2$, and $\omega_1 a^{-1} b^{-1}$ and \linebreak $\omega_1 a^{-1}b^{-1}a^4 \omega_2$ respectively. An induction hypothesis on the length of the paths finishes the job.\newline
 	
 	Hence we may suppose our paths are nepalese. We suppose they are not end-essential. So they are represented by a word $\omega$ of the form $\omega_1 bab^{-1}$. Note that the element $bab^{-1}a^i[a^b,a]a^{-i}ba^{-1}b^{-1}$ is in the subgroup $\mathbb{F}_2$ generated by $\{ [a^b,a], a[a^b,a]a^{-1}\}$. Hence, because of its specific form, it can be written as one of the generators (or their inverses), or a product of at most four generators (or their inverses). Say $g_1$ or $g_1g_2g_3g_4$. In particular  we can rewrite $\omega a^i [a^b,a]a^{-i} \omega^{-1}$ as $\omega_1 g_1 \omega_1^{-1}$ or $\omega_1 g_1\omega_1^{-1} \omega_1 g_2 \omega_1^{-1}\omega_1 g_3 \omega_1^{-1}\omega_1 g_4 \omega_1^{-1}$, where $\omega_1$ is an end-essential path.\newline
 	
 	\begin{figure}
 		\centering
 		\subcaptionbox{Twins.\label{twinpaths}}[0.45\linewidth][l]
 		{\includegraphics[width=0.45\textwidth]{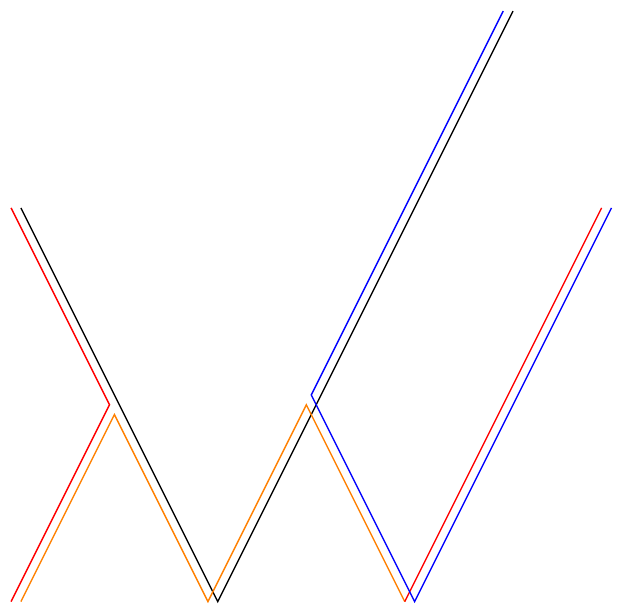}}
 		\hspace{20pt}
 		\subcaptionbox{A group element in $\Ker(\phi)$ represented as a path.\label{conjugate}}[0.45\linewidth][r]
 		{\includegraphics[width=0.45\textwidth]{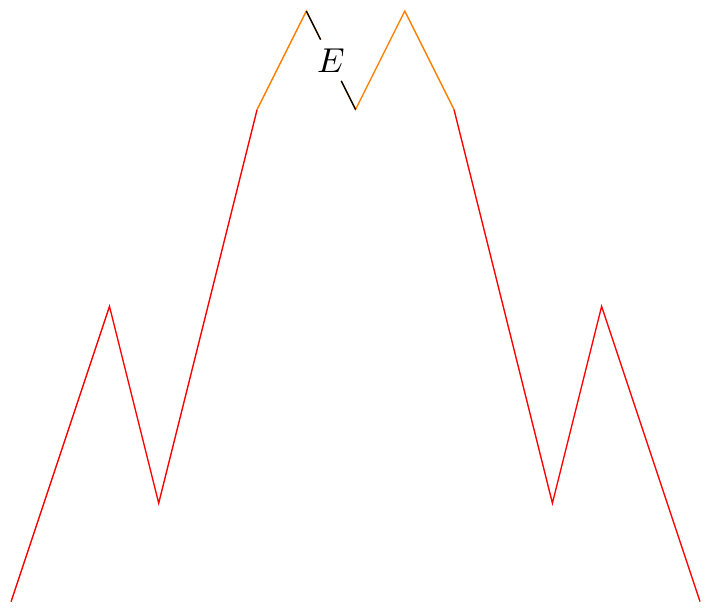}}
 		\caption{On the left: The black path (correspongding to $\omega$) has a valley $V$. Its twin at $V$ is given by the blue path. Note that the red paths corresponds to $\omega_1 a^{-1}b^{-1}$ and $ba\omega_1^{-1}$, and the orange path to the normal generator $[a^b,a]$. On the right: An element in $\Ker(\phi)$, red paths are the paths defined by $\omega$, the orange path corresponds to $[a^b,a]$, the second edge  $E$ of the commutator path seperates the Bass-Serre tree into two connected components.}\label{sexypaths}
 	\end{figure}

  	Finally consider $\omega$, a good representative for an end-essential nepalese path $p$. Let $\omega\notin \mathcal{O}$. We will show that $\omega$ can be written as a product of one (any) of its siblings and paths $q$ such that $c(q)< c(p)$. Consider two triplets $\omega_1 B \omega_2 $ and $\omega_1 a B a^{-1} \omega_2$, where $B$ is a tip. Then
	\begin{align*}
		\omega 			&= 	\omega_1 B\omega_2 \\
						&=	\omega_1 B a B^{-1} a^{-1} \omega_1^{-1} \omega_1 a B a^{-1} \omega_2 \\
						&= 	\omega_1 [a^b,a] \omega_1^{-1} \omega_1 (a B a^{-1} \omega_2) \text{, or}\\[10pt]
		\omega 			&= 	\omega_1 B\omega_2 \\
						&=	(\omega_1 a^{-1}) a B a^{-1} B^{-1} (a \omega_1^{-1}) \omega_1 a^{-1} B a \omega_2 \\
						&= 	(\omega_1 a^{-1}) [a^b,a]^{-1} (a \omega_1^{-1})( \omega_1 a^{-1} B a \omega_2).
	\end{align*} 	
 	Clearly $c(\omega_1)< c(\omega)$. On the other hand consider two twins $\omega_1  V  b^{-1} \omega_2$ and $\omega_1 a^{-1} V a b^{-1} \omega_2$. Note that the valley can either be of the form $b^{-1}ab$ or $bab^{-1}$. We suppose it is the former, the argument for the latter is similar. Note also Figure \ref{twinpaths} for a visual interpretation. One has that
 	\begin{align*}
 		\omega 			&= 	\omega_1 b^{-1} a b\omega_2 \\
 						&=	(\omega_1 a^{-1}b^{-1}) ba b^{-1}ab a^{-1} b^{-1}a^{-1} (ba\omega_1^{-1}) \omega_1 a^{-1}b^{-1} aba\omega_2\\
 						&=  (\omega_1 a^{-1}b^{-1}) [a^b,a] (ba\omega_1^{-1}) \omega_1 a^{-1}b^{-1} aba\omega_2.
 	\end{align*} 	
	The case $b^{-1}a^{-1}b$ is similar, obtaining a conjugate of $ [a^b,a]^{-1}$ in the first term. Since each connected component in $G_\sim$ is represented in $\mathcal{O}$ by some element, each end-essential nepalese path can be written as a product of the designated relative in $\mathcal{O}$ and elements with less tips or valleys. One ends the argument by induction on the number of tips and valleys. \newline
	
 	{\em Freeness.} We are now ready to prove the given basis is free. Let $\omega\in\mathcal{O}$, then $\omega a^i [a^b,a]a^{-i}\omega^{-1}$ does not necessarily represent a geodesic path (we call the path $p$), it may have backtracking when the last edge of $\omega$ and the first of $[a^b,a]$ overlap. Similarly it might also overlap on the last edge of $[a^b,a]$ and the first of $\omega^{-1}$. Let $E$ be the second edge of $[a^b,a]$ as a subpath of $p$ (see Figure \ref{conjugate}), then it seperates the tree in two conncected components (say $E^-$ and $E^+$). We claim that applying any other element $\tilde \omega a^s [a^b,a]a^{-s} \tilde\omega^{-1}$of $\mathcal{S}$, one will never traverse $E$ again. There are several possibilities.
 	\begin{itemize}
 		\item Firstly suppose $\tilde \omega = \omega$, then this reduces to the fact that $\langle [a^b,a], a[a^b,a]a^{-1} \rangle$ is free on two generators.
 		\item Consider the concatenated path $\omega a^i [a^b,a]a^{-i}\omega^{-1}\tilde \omega a^s [a^b,a]a^{-s} \tilde\omega^{-1} $. Since $\tilde \omega$ is end-essential and nepalese, this path remain in $E^+$, once having crossed $E$. Let $E_{\tilde \omega}$ be the second edge in the second commutator $[a^b,a]$. Suppose $E_{\tilde \omega}$  is not part of the path defined by $\omega^{-1}$. In particular, to cross $E$, we first need to cross $E_{\tilde \omega}$, which means we have merely displaced the problem.
 		\item So we may suppose $E_{\tilde \omega}$ is part of $\omega^{-1}$. Note that this is only possible when $\omega^{-1}$ contains a tip or a valley. In this case the composition $\omega^{-1} \tilde\omega [a^b,a]\tilde{\omega}^{-1}$ defines a sibling of $\omega^{-1}$. It is impossible, using a sequence of siblings, to cross $E$, since for every connected component of the sibling relation we only allowed one path to be contained in $\mathcal{O}$, which is $\omega$.
 	\end{itemize}
\end{proof}

\begin{remark}
	Note that if one is willing to give up the generating set, then there is a quick and clean argument to show that $\Ker(\phi)$ acts freely on the Bass-Serre tree. The stabiliser of each vertex is given by a conjugate of $\langle a \rangle$. So to show that $\Ker(\phi)$  acts freely, it is enough to show that the intersection with each of the stabilisers is trivial. Given that $\Ker(\phi)$ is normal, it suffices to show that its intersection with $\langle a \rangle$ is trivial. Since $a^n$ is mapped to $a^{2n}$ this is true.
\end{remark}

Furthermore we look at the limit case of applying the morphism $\phi$.
\begin{proposition}
$\Ker(\phi^n)$ is normally generated by $\{ [b^m a b^{-m}, a] \;\lvert\; 0<m\le n    \}$
\end{proposition}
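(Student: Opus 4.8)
The plan is to argue by induction on $n$, the case $n=1$ being exactly the Corollary above. Throughout write $c_m:=b^mab^{-m}$ (so $c_0=a$), let $R_n:=\{\,[c_m,c_0] : 0<m\le n\,\}$ be the proposed generating set, and write $\langle\langle X\rangle\rangle$ for the normal closure in $\BS$ of a subset $X$. The engine of the induction is the identity $\Ker(\phi^n)=\phi^{-1}(\Ker(\phi^{n-1}))$, which reduces the statement to a general fact about normal closures under a surjection together with a short commutator computation.

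I would first record the following elementary fact: if $\psi\colon H\twoheadrightarrow H'$ is a surjection whose kernel is normally generated by $R$, and $N'\trianglelefteq H'$ is normally generated by $S'$, then $\psi^{-1}(N')$ is normally generated by $R$ together with one $\psi$-preimage of each element of $S'$. The inclusion $\supseteq$ is immediate; for $\subseteq$ one lifts an expression of $\psi(g)$ as a product of conjugates of elements of $S'$ to the corresponding product of conjugates of the chosen preimages, and notes that the resulting error term lies in $\Ker(\psi)\subseteq\langle\langle R\rangle\rangle$. I apply this with $\psi=\phi$, with $R=\{[a^b,a]\}=\{[c_1,c_0]\}$, which normally generates $\Ker(\phi)$ by the Corollary, with $N'=\Ker(\phi^{n-1})$, and with $S'=R_{n-1}$ by the induction hypothesis. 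Since $\phi([b,a])=ba^2b^{-1}a^{-2}=a$ (using $ba^2b^{-1}=a^3$), the element $g_m:=[b^m[b,a]b^{-m},[b,a]]$ is a $\phi$-preimage of $[c_m,c_0]$, so we obtain that $\Ker(\phi^n)$ is the normal closure of $\{[c_1,c_0]\}\cup\{\,g_m : 0<m\le n-1\,\}$.

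It then remains to show that this normal closure equals $\langle\langle R_n\rangle\rangle$. For the inclusion $\langle\langle R_n\rangle\rangle\subseteq\Ker(\phi^n)$ one checks directly that $R_n\subseteq\Ker(\phi^n)$: for $m\le n$ one has $\phi^n(b^mab^{-m})=b^ma^{2^n}b^{-m}=a^{3^m2^{n-m}}$ (each conjugation by $b$ converts a factor $2$ into a factor $3$), which is a power of $a$ and hence commutes with $\phi^n(a)=a^{2^n}$, so $\phi^n([c_m,c_0])=1$. For the reverse inclusion, since $[c_1,c_0]\in R_n$ it is enough to show $g_m\in\langle\langle R_n\rangle\rangle=:N_n$ for $0<m\le n-1$. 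Working modulo $N_n$: the conjugation identity $[c_k,c_1]=b[c_{k-1},c_0]b^{-1}$ gives $[c_k,c_1]\in N_n$ for $1\le k\le n$, while $[c_k,c_0]\in N_n$ for $1\le k\le n$ by definition of $R_n$. Since $m\le n-1$, both $c_m$ and $c_{m+1}$ carry indices in $\{1,\dots,n\}$, so both commute with $c_0$ and with $c_1$ modulo $N_n$; hence so does $c_{m+1}c_m^{-1}$, and therefore $b^m[b,a]b^{-m}=c_{m+1}c_m^{-1}$ commutes modulo $N_n$ with $c_1c_0^{-1}=[b,a]$, i.e. $g_m\in N_n$. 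Combining the two inclusions with the conclusion of the previous paragraph yields $\Ker(\phi^n)=N_n=\langle\langle R_n\rangle\rangle$, closing the induction.

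The only genuinely delicate point is the index bookkeeping in the last step: one must make sure the relations $[c_k,c_1]\in N_n$ and $[c_{m+1},c_0]\in N_n$ are available for precisely the $k$'s and $m$'s that occur, which is where the constraint $m\le n-1$ (equivalently $m+1\le n$) gets consumed. The normal-closure fact and the identity $\phi^n(b^mab^{-m})=a^{3^m2^{n-m}}$ are routine, but deserve to be stated explicitly, since the whole argument rests on them.
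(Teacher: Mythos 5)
Your proof is correct, and it follows the route the paper merely gestures at: the paper's entire proof of this proposition is the phrase ``By induction,'' so you have in effect supplied the missing argument. The induction via $\Ker(\phi^n)=\phi^{-1}(\Ker(\phi^{n-1}))$, the lifting of normal generators through the surjection $\phi$ (using $\phi([b,a])=a$ to produce the preimages $g_m$), and the final commutator bookkeeping trading the $g_m=[c_{m+1}c_m^{-1},c_1c_0^{-1}]$ for the stated generators $[c_m,c_0]$ all check out, including the index constraint $m+1\le n$ and the verification $\phi^n(b^mab^{-m})=a^{3^m2^{n-m}}$ for the easy inclusion.
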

\begin{proof}
By induction. 
\end{proof}

\begin{corollary}\label{limit}
$\BS / \bigcup_n \Ker(\phi^n) = \langle a,b \;\lvert \; [b^m a b^{-m}, a] \text{ where } m\in \mathbb{N} \rangle.$
\end{corollary}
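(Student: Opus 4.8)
The plan is to read the corollary as an immediate consequence of the preceding proposition once one knows how $\bigcup_n \Ker(\phi^n)$ is normally generated, and then to invoke von Dyck's theorem. \emph{Step one} is a soft general fact: if $N_1 \le N_2 \le \cdots$ is an ascending chain of normal subgroups of a group $G$ with $N_k$ the normal closure $\langle\langle S_k\rangle\rangle$ of a subset $S_k$, then $\bigcup_k N_k$ is a normal subgroup of $G$ and equals $\langle\langle \bigcup_k S_k\rangle\rangle$; one inclusion is clear because a union of a chain of normal subgroups is itself a normal subgroup and it contains $\bigcup_k S_k$, the other because $\langle\langle\bigcup_k S_k\rangle\rangle$ contains every $N_k$. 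I would apply this with $G = \BS$, with $N_k = \Ker(\phi^k)$ — these do form an ascending chain, since $\phi$ is a homomorphism and so $\phi^k(g)=1$ forces $\phi^{k+1}(g)=1$ — and with $S_k = \{[b^m a b^{-m},a] : 0 < m \le k\}$, which is legitimate by the proposition. The conclusion is that $\bigcup_n \Ker(\phi^n)$ is the normal closure in $\BS$ of $\{[b^m a b^{-m},a] : m \in \NN\}$ (the index $m=0$ only adds the trivial relator $[a,a]$).

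\emph{Step two:} start from the one-relator presentation $\BS = \langle a,b \mid ba^2 = a^3 b\rangle$ and quotient by the normal closure just identified. By von Dyck's theorem (adjoining to a presentation the elements that normally generate the subgroup one kills),
\[ \BS \big/ \bigcup_n \Ker(\phi^n) \;=\; \langle a,b \mid ba^2 = a^3 b,\ [b^m a b^{-m},a]\ (m \in \NN)\rangle, \]
which is the claimed group.

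The step I would treat most carefully is the implicit claim that the Baumslag--Solitar relator $ba^2 = a^3 b$ may be dropped on the right-hand side. I do not expect that it can: in the group presented by the commutator relators alone, each $b^m a b^{-m}$ becomes identified with $a$ in the abelianization, so that abelianization is $\ZZ^2$, whereas the abelianization of $\BS$ — and hence of every quotient of $\BS$ — is $\ZZ$. I would therefore keep $ba^2 = a^3 b$ in the right-hand presentation; with it retained the corollary follows at once from Steps one and two, with no residual obstacle, whereas a strictly literal reading of the displayed statement would demand a derivation of $ba^2 = a^3 b$ from the commutators, which the abelianization count rules out.
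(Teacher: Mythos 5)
Your argument is the intended one: the paper offers no separate proof, treating the corollary as immediate from the preceding proposition via exactly the two steps you describe (the union of an ascending chain of normal closures is the normal closure of the union of the normally generating sets, then von Dyck). Your reservation about the displayed presentation is also well founded: as printed, the right-hand side omits the defining relator $ba^2=a^3b$, and your abelianization count shows this relator cannot be a consequence of the commutator relators --- the group $\langle a,b \mid [b^m a b^{-m},a]\ (m\in\NN)\rangle$ abelianizes to $\ZZ^2$, whereas every quotient of $\BS$ abelianizes to a quotient of $\ZZ$. So the relator $ba^2=a^3b$ must be retained, as you do; that this is what the paper intends is confirmed by the discussion immediately after the corollary, where the existence of $2^n$-th roots of $\overline{a}$ in $L$ is deduced using that relator. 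With it kept, your proof is complete and matches the (implicit) proof in the paper.
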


Call this group $L$. Note that $\forall n\in\mathbb{N}$ there is an element $\alpha\in L$, such that $\alpha^{2^n}= \overline{a}$. I.e $\overline{a}$ has infinitely many roots. More specifically the presentation induced by $\alpha$ and $\overline{b}$ is the same as the one given in Corollary \ref{limit}. One sees that the group generated by $\langle \overline{b^m a b^{-m}} \;\vert\; m\in\mathbb{N} \rangle $ is isomorphic to the dyadic rationals $\mathbb{Z}[1/2]$.

\section{Another interpretation of non-Hopfianness and a continuous map}\label{levittpres}

We present another interpretation on the non-Hopfianness of $\BS$, which was communicated to the author by Gilbert Levitt. In some sense, this is a more topological version of Remark 6.10 in \cite{levitt}. The paper treats generalised Baumslag-Solitar groups, which are fundamental groups of graphs of groups. This is also the setting of Remark 6.10. These groups can be  retrieved as the fundamental groups of certain 2-dimensional simplicial complexes (see \cite[Section 7]{delgado}). We will not introduce graphs of groups and associated concepts, but will rather immediately take the topological point of view adapted to $\BS$. As a last remark note that the forthcoming example is not particular to the case of $\BS$, but can easily be altered to apply to other Baumslag-Solitar groups. The same is true for Figure \ref{Levittpic}. \newline 

First as small detour. We note the following fact: in $\BS$, the elements $a^2$ and $a^3$ are ``the same" since they are conjugates.  What we mean is that they behave in the same way, which is made precise in the following lemma.
\begin{lemma}
	The group $\BS$ has an abstract group presentation, where the generators can be interpreted as either $\{a^2,b\}$ or $\{a^3,b\}$. This presentation is given by $\langle \lambda, \mu \;\vert\; \lambda^2 = \mu \lambda \mu^{-1}\lambda^{-1}\mu\lambda \mu^{-1}\rangle$.
\end{lemma}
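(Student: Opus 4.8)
The plan is to exhibit the isomorphism between $P := \langle \lambda,\mu \mid \lambda^2 = \mu\lambda\mu^{-1}\lambda^{-1}\mu\lambda\mu^{-1}\rangle$ and $\BS$ explicitly in the direction $\lambda\mapsto a^2,\ \mu\mapsto b$, and then to obtain the $\{a^3,b\}$-interpretation by conjugating. First I would define $\psi\colon P\to\BS$ on generators by $\psi(\lambda)=a^2$, $\psi(\mu)=b$ and check it is well defined: since $ba^2b^{-1}=a^3$ in $\BS$, one computes $\psi(\mu\lambda\mu^{-1}\lambda^{-1}\mu\lambda\mu^{-1}) = a^3a^{-2}a^3 = a^4 = \psi(\lambda)^2$. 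It is surjective because $a = (ba^2b^{-1})a^{-2} = \psi(\mu\lambda\mu^{-1}\lambda^{-1})$ and $b=\psi(\mu)$.

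The substantive step is to write down an explicit inverse $\theta\colon\BS\to P$ by $\theta(a)=x$, $\theta(b)=\mu$, where $x:=\mu\lambda\mu^{-1}\lambda^{-1}$. Here the load-bearing observation is that in $P$ one has $\lambda=x^2$: the defining relation reads $\lambda^2 = (\mu\lambda\mu^{-1}\lambda^{-1})^2\lambda = x^2\lambda$, so cancelling $\lambda$ gives $\lambda=x^2$. Granting this, $\theta$ is well defined because the relator $ba^2b^{-1}a^{-3}$ of $\BS$ is sent to $\mu x^2\mu^{-1}x^{-3} = \mu\lambda\mu^{-1}x^{-3}$, and $x^3 = x\cdot x^2 = x\lambda = \mu\lambda\mu^{-1}$, making this trivial. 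Evaluating on generators then gives $\psi\theta=\mathrm{id}_{\BS}$ (as $\psi(x)=ba^2b^{-1}a^{-2}=a$) and $\theta\psi=\mathrm{id}_{P}$ (as $\theta(a^2)=x^2=\lambda$), so $\psi$ is an isomorphism and $P$ is a presentation of $\BS$ with $\lambda,\mu$ realised by $a^2,b$. Alternatively one can reach $P$ from $\langle a,b\mid ba^2b^{-1}a^{-3}\rangle$ by Tietze moves: adjoin $\lambda$ with $\lambda=a^2$, rewrite the relator as $a=b\lambda b^{-1}\lambda^{-1}$, and eliminate $a$.

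For the $\{a^3,b\}$-interpretation I would use that conjugation $\iota\colon\BS\to\BS$, $g\mapsto bgb^{-1}$, is an automorphism with $\iota(a^2)=ba^2b^{-1}=a^3$ and $\iota(b)=b$. Then $\iota\circ\psi\colon P\to\BS$ is again an isomorphism, now sending $\lambda\mapsto a^3$, $\mu\mapsto b$; so the same abstract presentation $P$ is realised with its generators interpreted as $\{a^3,b\}$, as claimed.

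I expect the one point genuinely needing care is the well-definedness of $\theta$ — that is, the identity $\lambda=x^2$ in $P$ and the consequent vanishing of the image of the $\BS$-relator; the rest is bookkeeping. It is also worth keeping in mind the non-Hopfian pitfall: one cannot conclude that a surjection onto $\BS$ from an isomorphic group is injective (this is precisely the group whose failure of that implication is the subject of the paper), so the explicit inverse $\theta$ (or the conjugation argument) is really needed rather than a surjectivity-plus-abstract-isomorphism shortcut.
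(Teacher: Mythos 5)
Your proof is correct, and it reaches the presentation by a different (though ultimately equivalent) route from the paper. The paper proceeds purely by Tietze transformations: adjoin $\lambda=a^2$, observe $a=b\lambda b^{-1}\lambda^{-1}$ is a consequence of the relations, eliminate $a$, and simplify; you mention this as an aside, but your main argument instead exhibits the two mutually inverse homomorphisms $\psi$ and $\theta$ explicitly and verifies well-definedness in both directions. The mathematical content is the same — your identity $\lambda=x^2$ with $x=\mu\lambda\mu^{-1}\lambda^{-1}$ is exactly the relation $\lambda=b\lambda b^{-1}\lambda^{-1}b\lambda b^{-1}\lambda^{-1}$ that drives the paper's elimination step — but your version makes the verification of injectivity completely explicit, and your remark that surjectivity plus abstract isomorphism would \emph{not} suffice here (precisely because $\BS$ is non-Hopfian) is well taken and worth spelling out. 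Where you genuinely improve on the paper is the $\{a^3,b\}$ case: the paper says only that ``similar calculations'' give the same presentation, whereas you get it for free by post-composing $\psi$ with the inner automorphism $g\mapsto bgb^{-1}$, which carries $a^2$ to $ba^2b^{-1}=a^3$ and fixes $b$. That one-line conjugation argument is cleaner than redoing the Tietze computation and makes transparent \emph{why} $a^2$ and $a^3$ ``behave the same,'' which is the informal point the paper is making just before the lemma.
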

\begin{proof}
	Starting from the usual presentation $\langle a,b \;\vert \; ba^2b^{-1} =  a^3\rangle$, one can apply two sequences of Tietze transformations to obtain the presentation above. We present the calculations for $a^2$: 
	\begin{align*}
	\langle a,b \;\vert \; ba^2b^{-1} =  a^3\rangle &= \langle a, \lambda, b \;\vert \; \lambda= a^2,  \;  ba^2b^{-1} =  a^3\rangle\\
	&= \langle a, \lambda, b \;\vert \; \lambda= a^2,  \;  ba^2b^{-1} =  a^3,  \;  a= b\lambda b^{-1} \lambda^{-1}\rangle\\
	&= \langle \lambda, b \;\vert \; \lambda= b\lambda b^{-1} \lambda^{-1}b\lambda b^{-1} \lambda^{-1},  \;  b\lambda b^{-1} =  b\lambda b^{-1} \lambda^{-1}\lambda\rangle\\
	&= \langle \lambda, b \;\vert \; \lambda^2= b\lambda b^{-1} \lambda^{-1}b\lambda b^{-1}\rangle\\
	&= \langle \lambda, \mu\;\vert \; \lambda^2= \mu\lambda \mu^{-1} \lambda^{-1}\mu\lambda \mu^{-1}\rangle.
	\end{align*}
	We can obtain the same presentation for $a^3$ by doing similar calculations.
\end{proof}
We define $\phi'$ to be the morphism 
\[    BS(2,3) \rightarrow BS(2,3) :  
\begin{cases}
a \mapsto a^3\\
b\mapsto b \\
\end{cases}
. \]
From a group presentation point of view the following is true: since $a^2$ and $a^3$ behave in a completely similar way the morphisms $\phi$ and $\phi'$ are in fact also the same morphism! As a consequence they have the same kernel.\newline

\begin{figure}[b]
	\centering
	\includegraphics[width=0.5\textwidth]{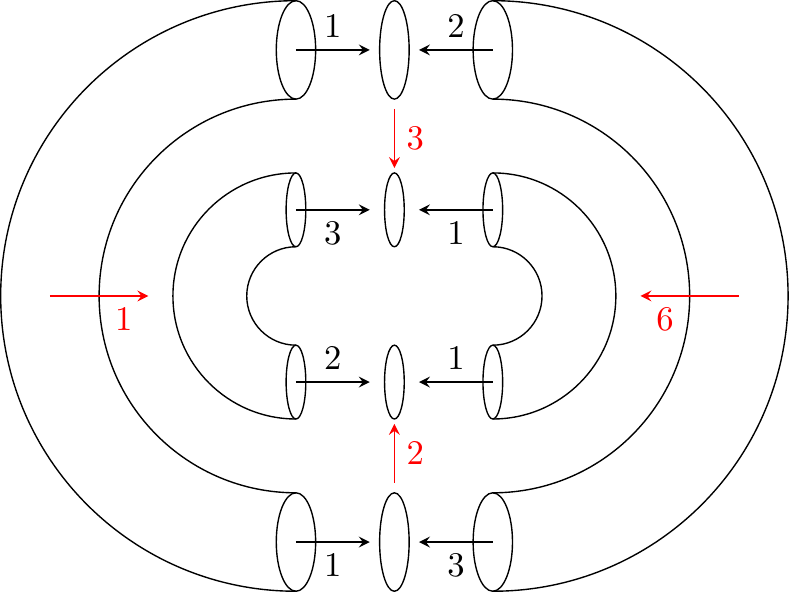}
	\caption{$\BS$ as a fundamental group and $\phi$ as a morphism induced by a continuous self-map (in red) of the space $B$.}\label{Levittpic}
\end{figure}

We start by describing a space $B$ such that $\BS$ is its fundamental group. The basic building blocks are two circles $S_0$ and $S_1$, and two annuli $A_0:= S_a^0 \times [0,1]$ and $A_1:= S_a^1 \times [1,2]$. Now we glue the blocks together. For $A_0$ we attach $S_a^0\times 0$ to $S_0$ along the identity map $e^{i\theta}\mapsto e^{i\theta}$. We do the same for $S_a^0\times 1$ to $S_1$. For $A_1$ we attach  $S_a^1\times 1$ to $S_0$ along the map $e^{i\theta} \rightarrow e^{3i\theta}$, and $S_a^1\times 2$ to $S_1$ along the map $e^{i\theta} \rightarrow e^{2i\theta}$. This space is depicted two times in Figure \ref{Levittpic}, being once given by the outer annuli and circles, and a second time by the inner annuli and circles. The reader can easily verify that the fundamental group of $B$ at $(1,\frac{1}{2})$ is generated by two loops
\begin{align*}
&\gamma_1: [0,1]\rightarrow B: \theta\mapsto \left(e^{2\pi i \theta}, \frac{1}{2}\right)\\
&\arraycolsep=1.4pt\def\arraystretch{1.5} 
\gamma_2: [0,1]\rightarrow B: x \mapsto \left\{\begin{array}{ll} \left(1,\frac{1}{2}-2x\right) & \text{ if } x\in  \left[0,\frac{1}{4}\right] \\
\left(1,1+2x-\frac{1}{2}\right) & \text{ if } x\in \left[ \frac{1}{4},\frac{3}{4} \right] \\
\left(1,\frac{5}{2}-2x\right) & \text{ if } x\in \left[\frac{3}{4},1\right]
\end{array}\right. , 
\end{align*}

where $\pi_1(B, (1,\frac{1}{2}))$ is isomorphic to $\BS$ by mapping $a$ to $[\gamma_1]$ and $b$ to $[\gamma_2]$. \newline

Finally the map $\tilde \phi: B\rightarrow B$ is given in red in Figure \ref{Levittpic}. We define the map piecewise
\begin{itemize}
	\item $A_0\rightarrow A_1: (e^{i\theta},x)\mapsto (e^{i\theta},2-x)$
	\item $A_1\rightarrow A_0: (e^{i\theta},x)\mapsto (e^{6i\theta},2-x)$
	\item $S_0\rightarrow S_1: e^{i\theta} \mapsto e^{2i\theta}$ 
	\item $S_1\rightarrow S_0: e^{i\theta} \mapsto e^{3i\theta}$ .
\end{itemize}

It is easy to verify that this map respects the gluing of the building blocks, hence is continuous on $B$. In particular $\tilde \phi$ induces a map on the fundamental group of $B$, being $\tilde{\phi}_*: \pi_1(B, (1,\frac{1}{2}))\rightarrow \pi_1(B, (1,\frac{3}{2}))$. One could check that $\Ker(\tilde{\phi}_*)$ is exactly $\Ker(\phi)$ (and thus also $\Ker(\phi')$). So from a group presentation point of view, we can say that $\tilde \phi$ induces the group morphism $\phi$. However, suppose we really want to recover $\phi$, then we would need a map from $\pi_1(B, (1,\frac{1}{2}))$ to itself. Consider the path 
\[\arraycolsep=1.4pt\def\arraystretch{1.5} 
 p^1: [0,1]\rightarrow B : x \mapsto \left\{\begin{array}{ll}  \left(1, \frac{1}{2} + x\right)   & \text{ if } x\in 							\left[0,\frac{1}{2}\right] \\
\left(1, 2-x\right)   & \text{ if } x\in \left[\frac{1}{2},1\right]
\end{array}\right..  \]
This path imposes an isomorphism $p^1_*: \pi_1(B, (1,\frac{3}{2})) \rightarrow \pi_1(B, (1,\frac{1}{2})) $. Now composing, we obtain that $p^1_*\circ \tilde{\phi}_*$ is exactly $\phi$. If, on the other hand, we want to recover $\phi'$, we only need to compose  $\tilde{\phi}_*$ with $p^2_*$, where $p^2$ is the path given by
\[\arraycolsep=1.4pt\def\arraystretch{1.5} 
 p^2: [0,1]\rightarrow B : x \mapsto \left\{\begin{array}{ll}  \left(1, \frac{1}{2} - x\right)   & \text{ if } x\in 							\left[0,\frac{1}{2}\right] \\
\left(1, \frac{1}{2} + x\right)   & \text{ if } x\in \left[\frac{1}{2},1\right]
\end{array}\right. .  \]

\bibliographystyle{alpha}
\bibliography{biblio}

\end{document}